\newtheorem{theorem}{Theorem}[section]
\newtheorem{lemma}[theorem]{Lemma}
\newtheorem{proposition}[theorem]{Proposition}
\newtheorem{corollary}[theorem]{Corollary}
\theoremstyle{definition}
\newtheorem{definition}[theorem]{Definition}
\newtheorem{question}[theorem]{Question}
\theoremstyle{remark}
\newtheorem{remark}[theorem]{Remark}
\numberwithin{equation}{section}
\newcommand{\abs}[1]{\lvert#1\rvert}
\newcommand{\norm}[1]{\lVert#1\rVert}
\newcommand{\inn}[1]{\langle#1\rangle}
\newcommand{\C}{\mathbb{C}}
\newcommand{\HH}{\mathbb{H}}
\newcommand{\M}{\mathbb{M}}
\newcommand{\R}{\mathbb{R}}
\newcommand{\Z}{\mathbb{Z}}
\DeclareMathOperator{\diam}{diam}
\DeclareMathOperator{\dist}{dist}
\DeclareMathOperator{\sign}{sign}
\DeclareMathOperator{\re}{Re}
\DeclareMathOperator{\im}{Im}
\DeclareMathOperator{\loc}{loc}
\DeclareMathOperator*{\var}{var}
\DeclareMathOperator*{\osc}{osc}
\def\XXint#1#2#3{{\setbox0=\hbox{$#1{#2#3}{\int}$}
\vcenter{\hbox{$#2#3$}}\kern-.5\wd0}}
\def\le{\leqslant}
\def\ge{\geqslant}
\begin{document}

\title{Variation of quasiconformal mappings on lines}

\author{Leonid V. Kovalev}
\address{Department of Mathematics, Syracuse University, 215 Carnegie, Syracuse,
NY 13244, USA}
\email{lvkovale@syr.edu}
\thanks{Kovalev was supported by the NSF grant DMS-0700549.}

\author{Jani Onninen}
\address{Department of Mathematics, Syracuse University, 215 Carnegie, Syracuse,
NY 13244, USA}
\email{jkonnine@syr.edu}
\thanks{Onninen was supported by the NSF grant  DMS-0701059.}

\subjclass[2000]{Primary 30C62; Secondary 26A45, 30C65, 26B30}

\date{January 9, 2009}

\keywords{Bounded variation, generalized variation, quasiconformal mappings, quaternions}

\begin{abstract}
We obtain improved regularity of homeomorphic solutions of the reduced  Beltrami equation, as compared to the standard Beltrami equation. Such an improvement is not possible in terms of H\"older or Sobolev  regularity; instead, our results concern the generalized variation of restrictions to lines. Specifically, we prove that the restriction to any line segment has finite $p$-variation for all $p>1$ but not necessarily for $p=1$.
\end{abstract}

\maketitle

\section{Introduction}

A key property of Sobolev functions in Euclidean spaces is their absolute continuity on almost every line
parallel to the coordinate axes. The restrictions to arbitrary lines need not be even bounded
for functions in Sobolev spaces $W^{1,s}$, $1\le s\le n$. However, for $s>n$
the restriction of a Sobolev function to \textit{any} line has finite $p$-variation with $p=s/(s-n+1)$, see Remark~\ref{morrey}. Here we refer to the generalized variation~\cite{MO,Wa,Yo}, which is defined as follows.

For distinct points $a,b\in \R^n$ we write $[a,b]= \{(1-t)a + tb \colon 0 \le t \le 1\}$ and call $[a,b]$ the line segment with the endpoints $a$ and $b$.  Any partition $0=t_0<t_1<\dots<t_N=1$ induces a partition of $[a,b]$ by the rule  $a_j=a+t_j(b-a)$, $j=0,\dots,N$.

\begin{definition}\label{phivar} Let $\phi\colon [0,\infty)\to [0,\infty)$ be a convex strictly increasing function
such that $\phi(0)=0$. A mapping $f$ from a line segment  $[a,b]\subset\R^n$ into $\R^m$ has \textit{finite $\phi$-variation} on $[a,b]$ if
\[
\var_{[a, b]} (f; \phi):= \sup \sum_{j=1}^N \phi(\abs{f(a_j)-f(a_{j-1})})<\infty,
\]
where the supremum is taken over all partitions $(a_j)_{j=0}^N$ of $[a,b]$ and over all $N\ge 1$.
\end{definition}

If $f$ is a mapping from a domain $\Omega\subset\R^n$ into $\R^m$, then we say that $f$ has \textit{finite $\phi$-variation on lines}
if its restriction to any compact line segment contained in $\Omega$ has finite $\phi$-variation.
When $\phi(t)=t^p$, we speak of $p$-variation (or simply variation if $p=1$) and write $\var_{[a,b]} (f; p)$.

We are primarily interested in the variation of quasiconformal mappings $f\colon \R^n \to\R^n$, where
  $n\ge 2$. Recall that a sense-preserving homeomorphism $f \colon \R^n \to \R^n$ is said to be {\it quasiconformal} if there exists $H < \infty$ such that
\begin{equation}\label{distine1}
\limsup\limits_{r \to 0} \frac{\underset{{|x-a|=r}}{\max} \left|f(x) - f(a)\right|}{\underset{{|y-a|=r}}{\min} \left|f(y) - f(a)\right|} \leqslant H \, , \quad \textnormal{ for every } a\in \R^n .
\end{equation}
 It is a well-known result of Gehring~\cite{Ge60} that such mappings are absolutely continuous on almost every line (ACL). This was recently extended to Ahlfors regular metric spaces by Balogh, Koskela and Rogovin~\cite{BKR}.
 The ACL property makes it possible to give an analytic definition of quasiconformal mappings.
 \begin{definition}\label{qc}
 A homeomorphism  $f \in W^{1,n}_{\loc}(\R^n ; \mathbb R^n)$ is $K$-quasiconformal, $1 \leqslant   K < \infty$, if it satisfies  the  distortion inequality
\begin{equation}\label{Kav22}
\norm{Df(x)}^n \leqslant   K   J(x,f) \quad \quad \mbox{a.e.}
\end{equation}
Here $\norm{Df(x)}$ stands for the norm of the differential matrix and $J(x,f)$ for the Jacobian determinant. A mapping $f \in W^{1,n}_{\loc}(\R^n ; \mathbb R^n)$ (not necessarily homeomorphism) satisfying~\eqref{Kav22} is called $K$-quasiregular~\cite{Rebook, Ribook}.
\end{definition}

 Our study of the variation of quasiconformal mappings on lines grew out of~\cite{IKO3} where it was proved that the ordinary differential equation $\dot{x}=f(x)$ has unique local solutions outside of $f^{-1}(0)$ provided that $f$ is quasiconformal and has bounded variation on $C^1$-smooth curves.
 By Gehring's theorem~\cite{Ge73} quasiconformal mappings are locally in $W^{1,s}$ for some $s>n$ and therefore
have finite $p$-variation on lines for some $p<n$. (In fact, any homeomorphism of class $W^{1,n}$ has finite $n$-variation on
lines~\cite[Thm 4.3]{Ma99}). In the opposite direction, a theorem of Bishop~\cite[Thm 1.1]{Bi}
implies that for any $p<n$ there is a quasiconformal mapping $f\colon\R^n\to\R^n$ such that the image of some line
segment under $f$ has Hausdorff dimension greater than $p$. Clearly, such $f$ has infinite $p$-variation on this segment.

Interestingly, some classes of quasiconformal mappings exhibit much higher regularity
along lines and smooth curves than their Sobolev or H\"older regularity would suggest.

\begin{definition}\label{demondef}
A mapping $f \colon \R^n \to \R^n$ is called \textit{$\delta$-monotone}, $0<\delta \le 1$, if for every $a,b \in \R^n$
\begin{equation}\label{demon}
\inn{f(a)-f(b),a-b} \le \delta \abs{f(a)-f(b)}\abs{a-b}.
\end{equation}
\end{definition}

Any nonconstant $\delta$-monotone mapping is quasiconformal~\cite[Cor. 7]{Ko}.  For example, the radial stretch $f(x)=|x|^{\alpha-1} x$, where $\alpha >0$, is $\delta$-monotone for some $\delta= \delta(\alpha)$.
This mapping is locally  H\"older continuous with exponent $\min\{\alpha,1\}$, which can be arbitrarily close to $0$. This shows that $\delta$-monotone mappings are no more regular on the H\"older and Sobolev scales  than general quasiconformal mappings. However, they have bounded variation on $C^1$-smooth curves, see~\cite[Thm. 3.11.7]{AIMbook} and~\cite[Thm. 1.10]{IKO3}. In particular,
\begin{equation}\label{property}
\var_{[a,b]}(f;1)<\infty \quad \mbox{if } a,b \in \R^n.
\end{equation}

When $n=2$, we often identify $\R^n$ with $\C$ and use the complex derivatives $f_z$ and $f_{\bar z}$.
Then the inequality~\eqref{Kav22} reads as
\begin{equation}\label{Kav23}
\left|f_{\bar z} \right| \leqslant k \left|f_{ z} \right|\,  \quad \textnormal{a.e.,} \quad  \textnormal{where } \  k= \frac{K-1}{K+1}
\end{equation}

A $\delta$-monotone mapping $f \colon \C \to \C$ satisfies the stronger, \textit{reduced} distortion inequality
\begin{equation}\label{BeIn}
\abs{f_{\bar z}} \le k \re f_z \quad \textnormal{a.e. in } \C,
\end{equation}
for some constant $0<k<1$ (Theorem~3.11.6~\cite{AIMbook}). The converse is false: for instance, $f(z)=iz$ satisfies~\eqref{BeIn} but is not $\delta$-monotone.

\begin{definition}\label{redqcdef}
A homeomorphism $f\in W_{\loc}^{1,2}(\C;\C)$ is called \textit{reduced quasiconformal} if it satisfies~\eqref{BeIn}.
\end{definition}

Inequality~\eqref{BeIn} implies that $f$ is a solution of the \textit{reduced Beltrami equation}
\begin{equation}\label{BeEq}
 f_{\bar z} = \lambda(z)  \re f_z \quad \textnormal{a.e. in } \C
\end{equation}
where $\abs{\lambda (z)}\le k$. Conversely, it is shown in~\cite[Thm 6.3.2]{AIMbook}
that any homeomorphic  solution of~\eqref{BeEq}  has constant sign of $\re f_z$. Therefore such solutions satisfy~\eqref{BeIn} up to a change of sign.

Unlike quasiconformality, the properties~\eqref{demon} and~\eqref{BeIn} are preserved under addition.
Both of these classes arise naturally in the theory of elliptic partial differential equations~\cite{AN,AIMbook,AJ}.
Our first result shows that~\eqref{property} cannot be extended to reduced quasiconformal mappings.

\begin{theorem}\label{rBexample} For every $k\in (0,1)$ there exists a reduced quasiconformal mapping
$f \colon \mathbb \C \to \mathbb C$ that satisfies~\eqref{BeIn} but does not have bounded variation on any nontrivial interval $[a,b]\subset\R$.
Furthermore, $f$ can be chosen so that $f_{\bar z}, \re f_z\in L^{\infty}(\C)$ and $\re f(x)=x$ for all $x\in \R$.
\end{theorem}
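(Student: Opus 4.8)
The plan is to separate real and imaginary parts and reduce everything to a statement about the conormal derivative of the real part. Write $f=u+iv$. Since the normalization $\re f(x)=x$ contributes only the finite quantity $b-a$ to any partition sum, the map $f$ has unbounded variation on $[a,b]\subset\R$ if and only if $x\mapsto v(x,0)$ does. Now recall $2\re f_z=u_x+v_y$, $2\re f_{\bar z}=u_x-v_y$, and $2\im f_{\bar z}=u_y+v_x$, so the hypotheses $\re f_z,f_{\bar z}\in L^\infty(\C)$ are equivalent to $u_x,v_y\in L^\infty$ together with $u_y+v_x\in L^\infty$. Consequently $v_x=-u_y+O(1)$ along $\R$, and the variation of $v(\cdot,0)$ is governed entirely by the conormal derivative $u_y(\cdot,0)$: it is finite precisely when $u_y(\cdot,0)\in L^1$. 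This already indicates why~\eqref{property} must fail here although it holds for $\delta$-monotone maps, for the assumption $\re f_z\in L^\infty$ constrains only $u_x$ and $v_y$ and imposes \emph{no} integrability on $u_y$ along the single exceptional line $\R$.

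Next I would invoke the standard dictionary between~\eqref{BeEq} and linear elliptic PDE (cf.~\cite{AIMbook}): a homeomorphism solves the reduced Beltrami equation if and only if $u=\re f$ is a weak solution of a divergence equation $\mathrm{div}(A(z)\nabla u)=0$, where $A$ is a bounded measurable matrix field whose ellipticity constants depend only on $k$, and $v=\im f$ is the associated stream function, $\nabla v=\mathcal{J}A\nabla u$ with $\mathcal J$ rotation by $\pi/2$. A direct computation gives $v_y=\sigma u_x$ and $v_x=-u_y+\tau u_x$ with $\sigma,\tau$ bounded functions of $\lambda$; hence $\re f_z=\tfrac12(1+\sigma)u_x$ and $f_{\bar z}=\tfrac12(1-\sigma+i\tau)u_x$, so that $\re f_z,f_{\bar z}\in L^\infty$ and the inequality~\eqref{BeIn} all hold automatically from uniform ellipticity as soon as $u_x$ is bounded and positive. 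The theorem is thereby reduced to a purely PDE problem: construct a uniformly elliptic $A$ and a global solution $u$ with $u(x,0)=x$, $0<u_x\in L^\infty(\C)$, whose conormal derivative $u_y(\cdot,0)$ fails to lie in $L^1$ on every subinterval of $\R$.

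The construction of $A$ is the substance of the proof. The mechanism is the breakdown of elliptic regularity for merely bounded measurable coefficients: smooth Dirichlet data $u(x,0)=x$ can coexist with a wildly oscillatory, non-integrable conormal derivative. I would take $A$ discontinuous and \emph{self-similar}, oscillating in $x$ at frequency comparable to $1/\abs{\im z}$ — equivalently, constant on the cells of a Whitney-type decomposition of $\C\setminus\R$ into squares of side comparable to their distance from $\R$ — and symmetric in $y$ so that $\re f(x)=x$ is preserved. The cell data are chosen so that each dyadic layer $2^{-n-1}<\abs{\im z}<2^{-n}$ contributes a fixed quantum of oscillation to $u_y$ near $\R$; summing over the infinitely many layers accumulating at any point of $\R$ forces $u_y(\cdot,0)\notin L^1$ on every interval, while uniform ellipticity keeps $u_x,v_y,u_y+v_x$ bounded. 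Local injectivity of $f=u+iv$ follows from $J(z,f)>0$, and the global homeomorphism property from degree theory together with the existence theory for~\eqref{BeEq} in~\cite[Thm 6.3.2]{AIMbook}.

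I expect the main obstacle to be twofold. First, the cell data must be arranged so that the oscillation genuinely \emph{accumulates} rather than cancels on every interval of $\R$ at once and so that $u_x$ stays bounded and positive throughout $\C$; this requires a careful self-similar bookkeeping supported by De Giorgi--Nash--Moser type estimates propagating oscillation across layers without degrading the uniform bounds. Second, because $\R$ is exactly the exceptional line, one may not assume $v(\cdot,0)$ is absolutely continuous, so the conclusion $\var_{[a,b]}(v(\cdot,0);1)=\infty$ cannot simply be read off from $u_y\notin L^1$; instead I would extract it directly from the self-similar lower estimates by testing against the dyadic partitions furnished by the construction and summing the increments $\sum_j\abs{v(a_j,0)-v(a_{j-1},0)}$ to infinity. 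With these in hand, the properties $\re f_z,f_{\bar z}\in L^\infty$ and $\re f(x)=x$ hold by design, completing the proof.
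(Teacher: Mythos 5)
Your reduction to the elliptic-PDE dictionary is correct as far as it goes: with $f_{\bar z}=\lambda\,\re f_z$ one indeed gets $v_y=\sigma u_x$ and $v_x=-u_y+\tau u_x$ with $\sigma,\tau$ determined by $\lambda$, so that boundedness and positivity of $u_x$ would give $\re f_z, f_{\bar z}\in L^\infty(\C)$ and \eqref{BeIn}. But this translation merely restates the theorem; every hard step is then deferred to a construction that is never carried out, and the tools you invoke cannot carry it out. De Giorgi--Nash--Moser theory gives interior H\"older continuity of $u$, never pointwise bounds on $\nabla u$; for discontinuous, self-similar coefficients the gradient of a solution is generically unbounded, so the simultaneous demands you place on $u$ --- wild conormal behavior on $\R$ together with $0<c\le u_x\le C$ everywhere --- are exactly the delicate point and must be built by hand. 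The paper sidesteps all elliptic machinery by writing $f$ explicitly as a lacunary series, $f(z)=z+\epsilon\sum_{m\ge 0}4^{-m}g(4^m z)$ as in \eqref{rBex1}, where $g$ is an explicit piecewise-linear bump; a cone/disjointness argument shows that at a.e.\ point at most one term of the series has nonvanishing $\re g_z$ or $g_{\bar z}$, whence $1-\epsilon L\le \re f_z\le 1+\epsilon L$ and $\abs{f_{\bar z}}\le \epsilon L$ hold trivially, and the homeomorphism property comes from the injectivity criterion for reduced quasiregular mappings \cite[Cor.~1.5]{IKO} rather than from degree theory.

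The more serious gap is quantitative: your divergence mechanism is insufficient and, as stated, not even the right one. A ``fixed quantum of oscillation per dyadic layer'' yields only a logarithmic derivative bound, $\abs{Df(z)}\le C\log(e+1/\abs{\im z})$ as in \eqref{Df}, and $\log(1/\abs{y})$ \emph{is} integrable near $y=0$; moreover the boundary trace $v(\cdot,0)$ is a continuous function (a uniformly convergent series), so the criterion ``$u_y(\cdot,0)\notin L^1$'' is not meaningful --- you flag this yourself but offer no substitute. What actually forces infinite variation in the paper is a cancellation-limited accumulation: on $\R$, the derivative of the $N$-th partial sum of the series is the Rademacher-type sum $\frac12\sum_{m=0}^{N-1}\bigl(s_{2m}(x/8)-s_{2m+1}(x/8)\bigr)$, and Khintchine's inequality (the $L^1$ norm of a Rademacher series is comparable to the $\ell^2$ norm of its coefficients, \cite[Thm.~V.8.4]{Zy}) gives the partition-sum lower bound $V_N\ge c\sqrt{N}$ of \eqref{nonr1} --- divergence at square-root rate, reflecting the fact that the layers largely cancel rather than add. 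Nothing in your outline produces such a lower bound, and the two ``obstacles'' you name in your closing paragraph are not side issues: together they constitute the entire content of the theorem.
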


In other words, $f$ in Theorem~\ref{rBexample} maps $\R$ into a curve that is nowhere locally rectifiable.
Although such examples of (non-reduced) quasiconformal mappings were known for a long time~\cite{Tu81},
our mapping $f$ seems to be the first one given by an explicit analytic expression, see~\eqref{rBex1}. The additive property of reduced quasiconformal
mappings allows us to derive the following result from Theorem~\ref{rBexample}.

\begin{corollary}\label{countlines} For any countable family of parallel lines $\{L_j  \colon j=1,2,\dots  \} $ in $\C$ there exists a quasiconformal
mapping $f\colon\C\to\C$ such that any nontrivial subarc of $f(L_j)$ is unrectifiable  for every $j$.
\end{corollary}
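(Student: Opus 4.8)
The plan is to superimpose translated copies of the mapping furnished by Theorem~\ref{rBexample}, one for each line, exploiting that the reduced distortion inequality~\eqref{BeIn} is preserved under nonnegative linear combinations. First I would normalize: since the $L_j$ are parallel, I choose a rotation $\rho$ carrying the horizontal lines $\ell_j=\{x+ic_j\colon x\in\R\}$ (with the $c_j\in\R$ distinct) onto the $L_j$; as $\rho$ is conformal, it suffices to build a quasiconformal $F$ for which every nontrivial subarc of each $F(\ell_j)$ is unrectifiable and then to set $f=F\circ\rho^{-1}$, since $f(L_j)=F(\ell_j)$. Let $f_0$ be the mapping of Theorem~\ref{rBexample}: it satisfies~\eqref{BeIn}, has $\re f_0(x)=x$ on $\R$, and has infinite variation on every segment of $\R$; as $\re f_0$ is affine there, it is $\im f_0$ that carries the infinite variation. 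Its explicit construction~\eqref{rBex1} confines all singular behaviour to $\R$, so the translates $g_j(z)=f_0(z-ic_j)$ each satisfy~\eqref{BeIn} (translation invariance), make $\ell_j$ unrectifiable, and, having their single singular line at $\ell_j$, have finite variation on every compact subarc of every $\ell_{j_0}$ with $j_0\ne j$.

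I would then put $F(z)=z+\sum_{j\ge1}\lambda_j g_j(z)$ with positive weights $\lambda_j$. The identity also obeys~\eqref{BeIn}, so every partial sum, being a nonnegative combination, satisfies $\abs{F_{\bar z}}\le k\,\re F_z$, i.e.\ is $K$-quasiregular with $K=(1+k)/(1-k)$. The weights are fixed by a single diagonalization making finite, simultaneously for all $j_0$ and all $R\in\N$, the quantities $\sum_j\lambda_j\sup_{\abs z\le R}\abs{g_j(z)}$, the $W^{1,2}_{\loc}$-tails, and $\sum_{j\ne j_0}\lambda_j\var_{[-R+ic_{j_0},\,R+ic_{j_0}]}(\im g_j)$; for instance $\lambda_j=2^{-j}/(1+M_j)$, with $M_j$ dominating the $j$-th entry of the finitely many constraints preceding stage $j$. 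This forces local uniform and $W^{1,2}_{\loc}$ convergence and, on each $\ell_{j_0}$, finite total variation of the off-diagonal part of $\im F$.

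On $\ell_{j_0}$ one has $\im F(x+ic_{j_0})=c_{j_0}+\lambda_{j_0}\im f_0(x)+\sum_{j\ne j_0}\lambda_j\im g_j(x+ic_{j_0})$, whose first term is constant and whose last term has finite variation on compact segments, while $\lambda_{j_0}\im f_0$ has infinite variation on every subinterval. Hence $\im F$, and a fortiori $F$, has infinite variation on every subsegment of $\ell_{j_0}$; since the length of the corresponding subarc of $F(\ell_{j_0})$ dominates that variation, every nontrivial subarc is unrectifiable.

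The step I expect to be hardest is to show that $F$ is a genuine homeomorphism rather than merely quasiregular. All pointwise inequalities survive the $W^{1,2}_{\loc}$ limit, so $F=u+iv$ obeys~\eqref{BeIn}, and the identity summand gives $\re F_z\ge1$, whence $\partial_x u\ge1$ and $J(\cdot,F)\ge(1-k^2)(\re F_z)^2\ge1-k^2>0$ a.e. Injectivity then follows by factoring $F$ through the proper homeomorphism $P\colon(x,y)\mapsto(u,y)$ (proper because $\partial_x u\ge1$): in the coordinates $(u,y)$ the second component of $F\circ P^{-1}$ has $y$-derivative $J/\partial_x u>0$ and is therefore strictly increasing, so $F\circ P^{-1}$, and hence $F$, is injective. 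By invariance of domain $F$ is then a homeomorphism of $\C$ onto an open set, which must be all of $\C$ by the invariance of conformal type under quasiconformal maps; together with~\eqref{Kav22} this shows $F$ is $K$-quasiconformal and completes the proof.
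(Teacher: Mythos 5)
Your construction is essentially the paper's: superimpose translates $f_0(\cdot-ic_j)$ of the mapping from Theorem~\ref{rBexample} with small positive weights, use the additivity of~\eqref{BeIn} to keep uniform control of the distortion of all partial sums, use the gradient bound~\eqref{Df} to make the off-diagonal sum tame on each line (the paper makes it Lipschitz there, you settle for finite variation, either suffices), and conclude that the diagonal term's infinite variation survives on every subsegment. Your diagonalization of weights is the paper's explicit conditions in different clothing. Where you genuinely depart from the paper is the step you yourself flag as hardest: the paper disposes of the homeomorphism issue by citation --- the partial sums satisfy~\eqref{BeIn}, hence are quasiconformal with a common $K$ (as in the proof of Theorem~\ref{rBexample}, via~\cite[Cor. 1.5]{IKO}), and the locally uniform limit is then quasiconformal by~\cite[II 5.3]{LV} --- whereas you attempt a self-contained injectivity proof, and that argument has a gap.

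Two problems with it. Minor: $\re F_z\ge 1$ does not give $\partial_x u\ge 1$; since $\partial_x u=\re F_z+\re F_{\bar z}$ and $\re F_{\bar z}$ may be negative, the correct bound is $\partial_x u\ge (1-k)\re F_z\ge 1-k$, still positive, so $P$ is indeed a proper homeomorphism. Serious: the step ``the second component of $F\circ P^{-1}$ has $y$-derivative $J/\partial_x u>0$ and is therefore strictly increasing'' is unjustified. $P^{-1}$ is only a homeomorphism, not Lipschitz --- $\partial_y u$ involves $\im F_z$, which blows up logarithmically near the lines $\ell_j$ --- so $F\circ P^{-1}$ need not be Sobolev and the chain rule does not apply; what you actually need is that $y\mapsto v(\xi(y),y)$ is absolutely continuous along each level curve $x=\xi(y)$ of $u$, and nothing in your argument provides that. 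Moreover, even granting an a.e.\ positive $y$-derivative, a continuous function with positive derivative a.e.\ need not be increasing (consider $x$ minus twice the Cantor function), so ``therefore strictly increasing'' is exactly the missing absolute continuity. The fix is short and stays inside your framework: integrate the pointwise inequality $\re F_z-\abs{F_{\bar z}}\ge (1-k)\re F_z\ge 1-k$ along segments --- valid on almost every segment by the ACL property of $W^{1,2}_{\loc}$ maps, then on all segments by continuity of both sides --- to obtain $\inn{F(a)-F(b),a-b}\ge (1-k)\abs{a-b}^2$ for all $a,b\in\C$. This gives injectivity and properness (hence surjectivity, by invariance of domain plus closedness of the image) in one stroke, and makes your proof fully self-contained; alternatively, quote~\cite[Cor. 1.5]{IKO} and~\cite[II 5.3]{LV} as the paper does.
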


Corollary~\ref{countlines} exhibits a quasiconformal mapping with irregular behavior on a relatively large set. The authors of~\cite{BHS}
asked (Question 4.4) whether for any set $E\subset\C$ of planar measure zero there is a quasiconformal mapping $f\colon\C\to\C$ such that
the volume derivative $\lim\limits_{r\to 0}\frac{\abs{f(B(x,r)}}{\abs{B(x,r)}}$ is infinite at every point of $E$.
While the singular behaviour of $f$ in Corollary~\ref{countlines} is of different nature,
the additive property of reduced quasiconformal mappings can be potentially useful in creating mappings with a large set of
infinite volume derivative.

Our second main result shows that, Theorem~\ref{rBexample} nonwithstanding, reduced quasiconformal mappings are much more regular
on lines than general quasiconformal mappings. In particular, they have finite $p$-variation for any $p>1$.

\begin{theorem}\label{fvar} Let $f\colon \C\to \C$ is a reduced quasiconformal mapping.
Then for any $q>1$ the mapping $f$ has finite $\phi$-variation on lines with
\begin{equation}\label{fvar1}
\phi(t)=\frac{t}{(\log (e+1/t))^q} \ \mbox{ if } t>0 \quad \mbox{ and }\quad  \phi(0)=0.
\end{equation}
\end{theorem}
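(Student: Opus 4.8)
The plan is to reduce the estimate to a statement about the imaginary part of $f$ and then run a dyadic decomposition of the image increments over scales, in which the convexity gauge with $q>1$ is exactly what forces the resulting series to converge. First I would record the pointwise consequences of \eqref{BeIn}. Writing $f=u+iv$ and using $\re f_z=\frac12(u_x+v_y)$ together with $f_{\bar z}=\frac12\bigl((u_x-v_y)+i(u_y+v_x)\bigr)$, the inequality $\abs{f_{\bar z}}\le k\re f_z$ gives $u_x\ge(1-k)\re f_z$ and $v_y\ge(1-k)\re f_z$, both strictly positive a.e. (after normalizing the sign of $\re f_z$, which is legitimate since homeomorphic solutions of \eqref{BeEq} have constant sign of $\re f_z$). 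Hence $u(\cdot,y)$ is nondecreasing in the first variable and $v(x,\cdot)$ is nondecreasing in the second. In particular, along any segment $[a,b]$ parallel to $\R$ the real part is monotone, so, writing $\Delta_j u:=u(a_j)-u(a_{j-1})$, superadditivity of the convex gauge gives $\sum_j\phi(\abs{\Delta_j u})\le\phi\bigl(\var_{[a,b]}u\bigr)<\infty$. Since $\phi$ is doubling and therefore $\phi(s+t)\le C(\phi(s)+\phi(t))$, the bound $\abs{f(a_j)-f(a_{j-1})}\le\abs{\Delta_j u}+\abs{\Delta_j v}$ reduces the problem on such segments to the $\phi$-variation of the imaginary part $v$.

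Next I would set up a direction-free oscillation estimate through the length--area method. For a subsegment $I=[a_{j-1},a_j]$ let $Q_j$ be the square of side $\ell_j=\abs{a_j-a_{j-1}}$ erected on $I$. Quasiconformal quasi-roundness yields $\abs{f(a_j)-f(a_{j-1})}\le\diam f(Q_j)$ and $\bigl(\diam f(Q_j)\bigr)^2\le C\,\abs{f(Q_j)}=C\int_{Q_j}J(z,f)\le C\int_{Q_j}\norm{Df}^2$. Erecting the squares on the same side of a partition of $[a,b]$ makes the $Q_j$ pairwise disjoint, so the local energies $\int_{Q_j}\norm{Df}^2$ sum to at most the total energy on a fixed bounded neighborhood of $[a,b]$. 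Combined with Gehring's higher integrability $f\in W^{1,2+\epsilon}_{\loc}$ and H\"older's inequality, this already reproduces, in every direction, finite $p$-variation for the single exponent $p=(2+\epsilon)/(1+\epsilon)>1$ furnished by the Morrey-type remark of the introduction.

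The heart of the matter is to replace this fixed exponent by the sharp logarithmic gauge, and here the plan is to exploit the monotone structure that generic quasiconformal mappings lack. I would group the increments by dyadic size $\abs{f(a_j)-f(a_{j-1})}\approx 2^{-m}$, for which $\phi(\abs{f(a_j)-f(a_{j-1})})\approx 2^{-m}/m^{q}$. The estimate follows once the total length carried at each scale, $\sum_{j:\,\abs{\Delta_j f}\approx 2^{-m}}\abs{\Delta_j f}$, is bounded uniformly in $m$: then $\var_{[a,b]}(f;\phi)\lesssim\sum_m m^{-q}<\infty$, and this series converges \emph{precisely because} $q>1$. The monotone part is already summable by the superadditivity step above; for the transverse (imaginary) part on lines parallel to $\R$ I would use $v_y>0$ to sharpen the length--area estimate inside the squares $Q_j$, bounding the scale-$m$ transverse displacement by the portion of the energy living at that scale and showing that the finite total energy distributes across scales summably against the logarithmic weight.

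I expect this scale-localized bound to be the main obstacle. The bare energy estimate of the second step is genuinely too lossy: equidistributing a fixed amount of $L^2$-energy across $N$ increments defeats any purely $L^2$ bound, and even Gehring integrability only excludes the single critical exponent, not the full logarithmic gauge. The argument must therefore use monotonicity to rule out the worst-case equidistribution -- the real-part increments telescope and so cannot all be simultaneously critical, while the transverse increments must be tied to the one-sided behavior of $v$ in the normal direction. The delicate point is to make this rigorous on the \emph{bad} lines, where $\int_L\norm{Df}=\infty$ and the restriction of $Df$ to the line carries no pointwise information, so that all control has to come from the two-dimensional monotone length--area mechanism rather than from a one-dimensional fundamental theorem of calculus. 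Finally, lines not parallel to $\R$ do not exhibit monotonicity of a single coordinate; for these I would either reduce to the coordinate directions or, more robustly, derive the scale-localized estimate directly from the direction-free oscillation bound reinforced by the monotonicity $u_x,v_y>0$.
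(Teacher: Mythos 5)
There is a genuine gap, and it sits exactly where you predict: your pivotal scale-localized claim is not only unproven but false. You propose to bound the mass at each dyadic scale, $M_m=\sum\{\abs{f(a_j)-f(a_{j-1})}\colon \abs{f(a_j)-f(a_{j-1})}\approx 2^{-m}\}$, uniformly in $m$. The paper's own example (Theorem~\ref{rBexample}) refutes this: on $[0,8]$ partitioned into $4^N$ equal intervals, the increments satisfy $V_N\ge c\sqrt{N}$ by~\eqref{nonr1}, while each single increment has size between $8\cdot 4^{-N}$ (the real part is the identity on $\R$) and $C N 4^{-N}$ (since $\abs{h_N'}\le N$); so a total mass $\gtrsim\sqrt N$ is spread over only $O(\log N)$ dyadic scales near $m\approx 2N$, and by pigeonhole some scale carries $M_m\gtrsim \sqrt{m}/\log m$, which is unbounded. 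What is true --- and is the actual content of the paper's Lemma~\ref{vargrowth} --- is a \emph{cumulative} bound: after sorting the increments decreasingly, the partial sums obey $s_j\le C\log(j+1)\abs{f(a)-f(b)}$, equivalently $\sum_{m'\le m}M_{m'}=O(m)$. Abel summation against the weight $(\log(j+1))^{-q}$ (Lemma~\ref{series}) then converges precisely for $q>1$. Your dyadic bookkeeping could be repaired to run on the cumulative bound, but nothing in your proposal proves that bound.

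The missing mechanism is the one structural fact you never invoke: the reduced class is invariant under adding purely imaginary linear maps. For $f^{\lambda}(z)=f(z)+i\lambda z$ the inequality~\eqref{BeIn} holds with the same $k$; each $f^{\lambda}$ is a homeomorphism by~\cite[Cor. 1.5]{IKO}, hence $K$-quasiconformal and $\eta$-quasisymmetric with $K,\eta$ independent of $\lambda$; choosing $\lambda$ to annihilate the imaginary part of the difference quotient gives $\abs{f^{\lambda}(b)-f^{\lambda}(a)}=\Delta_f(a,b)$ and hence the comparison inequality~\eqref{fvgen1}. The logarithmic growth then follows by dyadic rarefaction of the partition, using the additivity $\Delta_f(x,y)+\Delta_f(y,z)=\Delta_f(x,z)$ for collinear points: each halving of the partition costs only $2\eta(1)\Delta_f(a,b)$, and $\log_2 N$ halvings yield~\eqref{vargrowth1}. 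Your structural inputs cannot substitute for this. The coordinate monotonicity $u_x,v_y\ge(1-k)\re f_z$ (which you derive correctly) controls only $u$ along horizontal lines --- harmless anyway --- while the transverse part $v$, about which $v_y>0$ says nothing \emph{along} the line, is exactly what carries the $\sqrt N$ variation in the counterexample; your ``sharpened length--area estimate at each scale'' is a hope, not an estimate, and as the uniform-in-$m$ target is false it cannot be made one. Note finally that~\eqref{fvgen1} is direction-free, so the paper needs no separate treatment of oblique lines, whereas coordinate monotonicity is not rotation invariant and your fallback for non-axis-parallel segments is likewise unsubstantiated.
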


The conclusion of Theorem~\ref{fvar} is false for $0\le q<1/2$, see Remark~\ref{negphi}.
The gap between exponents $1/2$ and $1$ remains open.

\begin{question}\label{questionq} What is the smallest value of $q$ for which the conclusion of Theorem~\ref{fvar} holds?
\end{question}

Since $\delta$-monotone mappings exist in any dimension $n\ge 2$, one may ask whether it is possible to extend the
definition of reduced quasiconformal mappings to higher dimensions. This question is addressed in section~\ref{higher}, where we use quaternions
to define reduced quasiconformal mappings in four dimensions, and extend Theorem~\ref{fvar} to them.

\begin{question}\label{questionred} Is there a natural analogue of reduced
quasiconformal mappings in dimensions other than $2$ and $4$?
\end{question}

\section{Preliminaries}

In this section we first estimate the $p$-variation of Sobolev functions of lines. Although this result is probably known we  give a proof for the sake of completeness.  Later in the section we define quasisymmetric and monotone mappings and introduce some relevant notation.
In this paper $\Omega$ stands for a domain in $\R^n$.

\begin{proposition}\label{morrey}
Let  $u\in W^{1,s}(\Omega)$,  $s>n$. Then the restriction of $u$ to any closed line segment  $I \subset \Omega$ has finite $p$-variation with $p=s/(s-n+1)$.
\end{proposition}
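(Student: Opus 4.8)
The starting point is that the hypothesis $s>n$ places $u$ in the range of the Morrey--Sobolev embedding, so $u$ has a (H\"older) continuous representative and its restriction to $I$ is well defined; all estimates below refer to this representative. Since $I$ is compact in $\Omega$, I would first cover it by finitely many subsegments each of length smaller than $\dist(I,\partial\Omega)$. Because the $p$-variation is quasi-subadditive (inserting a point splits one increment $\abs{u(a_j)-u(a_{j-1})}^p$ into at most $2^{p-1}$ times the two pieces), it suffices to bound $\var_{I'}(u;p)$ on each short subsegment $I'$. Thus I may assume that every ball centered on $I$ of radius at most $\abs{b-a}$ lies in $\Omega$.

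Fix a partition $a=a_0,a_1,\dots,a_N=b$ of $I$ and write $\ell_j=\abs{a_j-a_{j-1}}$, so that $\sum_{j=1}^N\ell_j=\abs{b-a}=:L$. Let $B_j$ be the ball whose diameter is the segment $[a_{j-1},a_j]$, i.e.\ the ball of radius $\ell_j/2$ centered at the midpoint of that segment. The localized Morrey inequality, valid on the closed ball by continuity of the representative, gives
\[
\abs{u(a_j)-u(a_{j-1})}\le C\,\ell_j^{1-n/s}\left(\int_{B_j}\abs{\nabla u}^s\right)^{1/s},\qquad C=C(n,s).
\]
Raising to the power $p=s/(s-n+1)$ and summing, I must control $\sum_j \ell_j^{(1-n/s)p}\bigl(\int_{B_j}\abs{\nabla u}^s\bigr)^{p/s}$, which I estimate by H\"older's inequality with the conjugate exponents $r=s-n+1$ and $r'=(s-n+1)/(s-n)$ (both exceed $1$ since $s>n$). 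The choice $pr=s$ turns the gradient factor into $\int_{B_j}\abs{\nabla u}^s$, while a direct computation gives $(1-n/s)\,p\,r'=1$, so the length factor collapses to $\sum_j\ell_j=L$. Hence
\[
\sum_{j=1}^N\abs{u(a_j)-u(a_{j-1})}^p\le C\,L^{(s-n)/(s-n+1)}\left(\sum_{j=1}^N\int_{B_j}\abs{\nabla u}^s\right)^{1/(s-n+1)}.
\]

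It remains to bound $\sum_j\int_{B_j}\abs{\nabla u}^s$ uniformly in the partition, and this is the step I expect to be delicate. The naive temptation is to use any ball of radius comparable to $\ell_j$ containing both endpoints; but such balls can overlap logarithmically (small balls clustering near a limit point of the partition accumulate under larger ones), which would degrade the exponent. The diameter-ball choice is what avoids this: the orthogonal projection of $B_j$ onto the line through $I$ is exactly the closed subinterval $[a_{j-1},a_j]$, and these subintervals have pairwise disjoint interiors, so every point of $\R^n$ lies in at most two of the $B_j$. Therefore $\sum_j\int_{B_j}\abs{\nabla u}^s\le 2\int_{\tilde I}\abs{\nabla u}^s$ for a fixed compact neighborhood $\tilde I$ of $I$ in $\Omega$ containing all the $B_j$, giving a bound on $\sum_j\abs{u(a_j)-u(a_{j-1})}^p$ that depends only on $L$ and $\norm{\nabla u}_{L^s(\tilde I)}$. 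Taking the supremum over partitions yields $\var_I(u;p)<\infty$. The essential point is that $p=s/(s-n+1)$ is precisely the value for which the two H\"older exponents produce the length sum $\sum\ell_j$ together with a single, boundedly overlapping copy of $\int\abs{\nabla u}^s$; this interplay between the exponent arithmetic and the one-dimensional (bounded-overlap) geometry is the crux of the argument.
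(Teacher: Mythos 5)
Your proof is correct and takes essentially the same route as the paper: reduce to subsegments shorter than $\dist(I,\partial\Omega)$, apply Morrey's inequality on the balls whose diameters are the partition segments, and use H\"older's inequality with exactly the exponent arithmetic forced by $p=s/(s-n+1)$. The only cosmetic differences are that the paper performs the initial splitting via a lemma bounding $\var_I$ by the variations on the pieces plus $(M-1)\osc_I u$ rather than by quasi-subadditivity of $t\mapsto t^p$, and that it leaves implicit the disjointness of the balls' interiors, which you rightly make explicit.
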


The Morrey-Sobolev embedding theorem states that $ W^{1,s}(\Omega) \subset C^\alpha_{\loc} (\Omega)$ with $\alpha= 1- n/s$. Clearly, any function $u \in C^\alpha_{\loc} (\Omega)$ has finite $p$-variation on lines with $p=1/\alpha= s/(s-n)$. However, Proposition~\ref{morrey} gives a better value of $p$. Its proof requires the following lemma.

\begin{lemma}\label{union}
Let $I$ be a line segment partitioned into smaller segments $I_m$, $m=1,2, \dots , M$. For any  mapping $f \colon I \to \R^n$ we have
\begin{equation}
\var_{I} (f;  \phi) \le  \sum_{m=1}^M \var_{I_m} (f;\varphi)+ (M-1) \osc_I f .
\end{equation}
\end{lemma}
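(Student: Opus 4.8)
The plan is to prove the inequality for one arbitrary but fixed partition of $I$ and then pass to the supremum, since the right-hand side is independent of the partition. Write the subdivision points of $I$ induced by the $I_m$ as $c_0<c_1<\dots<c_M$, so that $I_m=[c_{m-1},c_m]$ and $c_1,\dots,c_{M-1}$ are the $M-1$ interior break points. Fix a partition $a_0,\dots,a_N$ of $I$ and classify each subinterval $[a_{j-1},a_j]$ as \emph{good} if it is contained in a single $I_m$, and as \emph{straddling} if its interior contains one of the break points $c_1,\dots,c_{M-1}$. The goal is to bound the good part by $\sum_m \var_{I_m}(f;\phi)$ and the straddling part by $(M-1)\osc_I f$.

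First I would dispose of the good intervals. For each fixed $m$, the good intervals lying in $I_m$ are consecutive and have their endpoints in $[c_{m-1},c_m]$; adjoining $c_{m-1}$ and $c_m$ turns them into a genuine partition of $I_m$, whose $\phi$-sum only increases since the two adjoined terms are nonnegative. Hence the contribution of the good intervals inside $I_m$ is at most $\var_{I_m}(f;\phi)$, and summing over $m$ bounds the entire good part by $\sum_{m=1}^M \var_{I_m}(f;\phi)$.

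Next I would treat the straddling intervals, which is where the content lies. The key combinatorial observation is that the subintervals of a partition have disjoint interiors, so each break point $c_1,\dots,c_{M-1}$ lies in the interior of at most one subinterval; consequently there are at most $M-1$ straddling intervals (an interval containing several break points is still counted once and only reduces the total). For each straddling interval, monotonicity of $\phi$ gives $\phi(\abs{f(a_j)-f(a_{j-1})})\le\phi(\osc_I f)$, which is at most $\osc_I f$ for the functions $\phi$ we care about, namely those with $\phi(t)\le t$ such as~\eqref{fvar1}. Multiplying by the count $M-1$ produces the term $(M-1)\osc_I f$; adding the good and straddling contributions and taking the supremum over partitions yields the claim.

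The hard part — and the reason an error term must appear at all — is that $\phi$-variation fails to be subadditive over adjacent segments once $\phi$ is nonlinear. Because a convex $\phi$ with $\phi(0)=0$ is superadditive, inserting the break points $c_i$ into a partition can actually \emph{decrease} its $\phi$-sum, so one cannot reduce to the case where every $c_i$ is a partition point by simply refining. The straddling intervals therefore cannot be split for free and must instead be estimated crudely through the oscillation; the whole argument then rests on the clean counting bound that there are at most $M-1$ of them.
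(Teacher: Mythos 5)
Your proof is correct and follows essentially the same route as the paper's: the paper splits the index set into $E$ (subintervals contained in some $I_m$) and $F$ (the rest), which is exactly your good/straddling dichotomy, and bounds the $E$-sum by $\sum_m \var_{I_m}(f;\phi)$ and the $F$-sum by the count $M-1$ times the oscillation, just as you do. The one caveat is your appeal to $\phi(t)\le t$ to pass from $\phi(\osc_I f)$ to $\osc_I f$: the lemma is actually applied in Proposition~\ref{morrey} with $\phi(t)=t^p$, $p>1$, where this inequality can fail, but this imprecision is present in the paper's own statement too, and the clean form of the conclusion --- with $(M-1)\,\phi(\osc_I f)$ on the right, which suffices for the application --- is what both your argument and the paper's actually deliver.
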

\begin{proof}
Fix a partition $(a_j)_{j=0}^{N}$ of $I$. We divide the set of indices as follows.
\[E= \{j=1, \dots , N \colon [a_{j-1},a_j] \subset I_m \text{ for some } m   \} , \qquad F= \{1, \dots , N\} \setminus E .\]
Then
\[\sum_{j\in E} \phi (\abs{f(a_j)-f(a_{j-1})}) \le \sum_{m=1}^M \var_{I_m} (f;\varphi)\]
and
\[\sum_{j\in F} \phi (\abs{f(a_j)-f(a_{j-1})}) \le (M-1) \osc_I f . \]
\end{proof}

\begin{proof}[Proof of Proposition~\ref{morrey}.]
Dividing $I$ into subintervals and using Lemma~\ref{union} we may reduce our task to the case
\[\diam I < \dist (I, \partial \Omega) .\]
Let $(a_j)_{j=0}^N$ be a partition of  $I$.  For $j=1, \dots , N$ let $\overline{B}_j$ be the closed ball with segment $[a_{j-1}, a_j]$ as a diameter.  Morrey's inequality~\cite[p.~143]{EG} yields
\[\osc_{\overline{B}_j} u \le C\,  \abs{a_j - a_{j-1}}^{1-n/s} \left(\int_{B_j} \abs{\nabla u(x)}^s \, d x \right)^{1/s} . \]
Raising to the power $p$ and noticing that $(1-n/s)p= 1-p/s$ we arrive at
\[\big( \osc_{\overline{B}_j} u \big)^p \le C\,  \abs{a_j - a_{j-1}}^{1-p/s} \left(\int_{B_j} \abs{\nabla u(x)}^s \, d x \right)^{p/s} . \]
Summing over $j$ and  applying  H\"older's inequality we obtain
 \begin{equation}\label{E}
\sum_{j=1}^N  \big(\osc_{\overline{B}_j} u\big)^p  \le C \left( \sum_{j=1}^N  \abs{a_j - a_{j-1}}   \right)^{1-p/s}   \left( \sum_{j=1}^N \int_{B_j} \abs{\nabla u(x)}^s \, d x \right)^{p/s}.
 \end{equation}
Therefore
\[
 \var_I(u; p) \le C (\diam I)^{1-p/s}  \left( \int_{\Omega} \abs{\nabla u(x)}^s \, d x  \right)^{p/s}
 \]
as desired.
\end{proof}

\begin{definition}\label{qs}
Let $\eta \colon [0, \infty) \to [0, \infty)$ be a homeomorphism. An injective mapping $f \colon \R^n \to \R^n$ is $\eta$-quasisymmetric if
\[\frac{\abs{f(c)-f(a)}}{\abs{f(b)-f(a)}} \le \eta \left(\frac{\abs{c-a}}{\abs{b-a}}  \right)\]
for any distinct points $a,b,c \in \R^n$.
The function $\eta$ is called a modulus of quasisymmetry of $f$.
\end{definition}
It is well-known that a mapping $f \colon \R^n \to \R^n$ is quasiconformal if and only if it is sense-preserving and quasisymmetric ~\cite{Hebook}.

Given a mapping $f \colon \R^n \to \R^n$, where $\R^n \subset\R^n$, we define
the modulus of monotonicity $\Delta_f \colon \R^n \times \R^n \to \R$ by the rule
\begin{equation}\label{Deltadef}
\Delta_f(a,b)= \begin{cases} \left\langle f(a)-f(b), \frac{a-b}{|a-b|} \right\rangle & \quad \textnormal{if } a \ne b\\
0 & \quad \textnormal{if } a = b  \end{cases}
\end{equation}
Clearly $\abs{\Delta_f(a,b)}\le\abs{f(a)-f(b)}$. By definition, $f$ is a monotone mapping if $\Delta_f(a,b)\ge 0$
for all $a,b\in\R^n$, and is strictly monotone if $\Delta_f(a,b)>0$ unless $a=b$.
Any reduced quasiconformal is monotone by (1.9) in~\cite{IKO3}.
Also, $f$ is $\delta$-monotone
if and only if  $\Delta_f(a,b)\ge\delta\abs{f(a)-f(b)}$ for all $a,b \in \R^n$.
When $n=2$, the modulus of monotonicity can be expressed in complex notation:
\[\Delta_f(a,b)=\re\left(\frac{f(a)-f(b)}{a-b}\right)\abs{a-b}.\]

\section{Generalized variation on lines: Proof of Theorem~\ref{fvar}}

We will obtain Theorem~\ref{fvar} as a consequence of the following result.

\begin{theorem}\label{fvgen} Let $f\colon \Omega\to \R^n$ be a mapping and suppose that there
is a homeomorphism $\eta\colon[0,\infty) \to[0,\infty)$ such that
\begin{equation}\label{fvgen1}
\abs{f(c)-f(a)}\le \frac{\abs{c-a}}{\abs{b-a}}\abs{f(b)-f(a)} +
\eta\left(\frac{\abs{c-a}}{\abs{b-a}}\right)\Delta_f(a,b)
\end{equation}
for any distinct points $a,b,c\in\Omega$. Then for any $q>1$ the mapping $f$ has finite $\phi$-variation
on lines with $\phi$ as in~\eqref{fvar1}.
\end{theorem}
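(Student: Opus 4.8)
The plan is to exploit two competing ``budgets'' along a line: the usual length budget and, crucially, a monotonicity budget coming from $\Delta_f$ which, unlike the total chord length, stays bounded independently of the partition. First I would reduce to a single compact segment $I=[a,b]\subset\Omega$ on which $\Omega_0:=\osc_I f<\infty$; by Lemma~\ref{union} it costs only finitely many oscillation terms to subdivide, so there is no loss in assuming $I$ is as convenient as we like. The key elementary observation is that $\Delta_f$ is additive along a line: if $u,v,w$ lie on $I$ with $v$ between $u$ and $w$, then $\frac{u-v}{\abs{u-v}}=\frac{v-w}{\abs{v-w}}$, so $\Delta_f(u,w)=\Delta_f(u,v)+\Delta_f(v,w)$. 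Consequently, for every partition $(a_j)$ of $I$ the telescoping sum $\sum_j\Delta_f(a_{j-1},a_j)=\Delta_f(a,b)\le\Omega_0$ is bounded by a single partition-independent constant. Note also that \eqref{fvgen1} together with $\Delta_f(a,b)\le\abs{f(a)-f(b)}$ shows $f$ is quasisymmetric with modulus $t\mapsto t+\eta(t)$; in particular the oscillation of $f$ over any subsegment is comparable to the chord joining its endpoints.

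The engine of the proof is a two-scale estimate obtained by feeding a dyadic bisection into \eqref{fvgen1}. Fix the dyadic subdivisions of $I$ and write $C_{g,k}$ for the chord length over the $k$-th interval of generation $g$ (length $\abs{b-a}2^{-g}$) and $\Delta_{g,k}\ge 0$ for the corresponding value of $\Delta_f$. Applying \eqref{fvgen1} with $a,b$ the endpoints of a generation-$g$ interval $J$ and $c$ its midpoint---once from each end, using the symmetry of $\Delta_f$---gives, for either child $J^{\ast}$ of $J$,
\[
C_{J^{\ast}}\le \tfrac12\,C_{J}+\eta(\tfrac12)\,\Delta_{J}.
\]
Summing over all children of generation $g$ and using $\sum_k\Delta_{g,k}=\Delta_f(a,b)\le\Omega_0$, the total chord length $\Sigma_g:=\sum_kC_{g,k}$ satisfies $\Sigma_{g+1}\le\Sigma_g+2\eta(\tfrac12)\Omega_0$, hence $\Sigma_g\lesssim\Omega_0(1+g)$. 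Thus the total chord length over $2^g$ equal subintervals grows only linearly in $g$, i.e.\ logarithmically in the number of pieces---this is the precise sense in which reduced quasiconformal maps ``just barely'' fail to have bounded variation, and it is the quantitative substitute for \eqref{property}.

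The final step converts this slow growth into finite $\phi$-variation, and this is where $q>1$ enters. Unfolding the two-scale recursion expresses each $C_{g,k}$ as a geometrically weighted sum of the increments $\Delta_f$ of its dyadic ancestors; since $\phi(t)\le t$ and $\phi'\le 1$ with $\phi'(t)\asymp(\log(e+1/t))^{-q}$ near $0$, I would estimate $\Phi_g:=\sum_k\phi(C_{g,k})$ by a layer-cake argument over the sizes of these ancestor sums. The point is a dichotomy: where the monotonicity mass $\Delta_f$ is spread out the chords have size $\approx\Omega_0\,g\,2^{-g}$, so the logarithmic weight in $\phi$ contributes a factor $\approx(g\log 2)^{-q}$ that beats the linear-in-$g$ size; where the mass is concentrated the large chords are few (a weak-type count against the budget $\sum_k\Delta_{g,k}\le\Omega_0$), so their total $\phi$-contribution is bounded. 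Balancing these should give $\sup_g\Phi_g<\infty$ precisely when $q>1$, after which an arbitrary partition is dominated by the dyadic sums by replacing each sub-arc chord with the comparable dyadic chord at its scale (using the quasisymmetry noted above).

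I expect this last step---obtaining the uniform-in-$g$ bound on $\Phi_g$ and passing from dyadic to arbitrary partitions---to be the main obstacle. The two regimes must be balanced against the logarithmic gain, and the loss incurred through the crude inequalities $\phi(t)\le t$ and $\phi'\le 1$ is exactly what forces the restriction to $q>1$ rather than the conjecturally optimal exponent, consistent with the gap recorded in Question~\ref{questionq}.
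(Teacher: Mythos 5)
Your additivity observation and the bisection recursion $C_{J^{\ast}}\le \tfrac12 C_J+\eta(\tfrac12)\Delta_J$ are correct, and the resulting bound $\Sigma_g\lesssim \Omega_0(1+g)$ is the same mechanism as the paper's Lemma~\ref{vargrowth} --- except that the paper runs it in the opposite direction, \emph{merging} adjacent intervals of an \emph{arbitrary} partition (in the merging direction the relevant ratio is always $\le 1$, so $\eta(1)$ suffices regardless of uniformity), which yields $\sum_{j}\abs{f(a_j)-f(a_{j-1})}\le C\log(N+1)\abs{f(a)-f(b)}$ for \emph{every} partition, not just uniform dyadic ones. This difference is not cosmetic, and both halves of your final step break on it. First, the per-generation total $\Sigma_g\lesssim g\Omega_0$ by itself does not imply $\sup_g\Phi_g<\infty$: a configuration with $\sim g$ chords each of size $\sim\Omega_0$ is consistent with $\Sigma_g\lesssim g\Omega_0$ but gives $\Phi_g\gtrsim g\,\phi(\Omega_0)\to\infty$. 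What is needed is a bound at every rank: the sum of the $j$ largest chords must be $\lesssim\log(j+1)$, uniformly. Your ``weak-type count against the budget $\sum_k\Delta_{g,k}\le\Omega_0$'' cannot supply this as stated, because a chord $C_{g,k}$ is not controlled by $\Delta_{g,k}$ at its own generation (only $\Delta\le$ chord holds); a large chord is produced by heavy $\Delta$'s of \emph{coarser ancestors}, so the count must be run through the unfolded recursion $C_{g,k}\le 2^{-g}C_I+\eta(\tfrac12)\sum_{i<g}2^{\,i+1-g}\Delta_{i,\mathrm{anc}_i(k)}$ using the budgets at all generations $i<g$ simultaneously. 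This can be made to work, but it is absent from your sketch.

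Second, and more seriously, the passage from dyadic to arbitrary partitions does not follow from $\sup_g\Phi_g<\infty$. A general partition mixes scales: replacing each chord by one or two comparable dyadic chords produces a family of dyadic intervals drawn from many different generations --- in the extreme, one interval per generation, nested at a point. A uniform bound on each single generation says nothing about such a transversal sum $\sum_g\phi(C_{J_g})$, and your hypotheses give no decay to make it converge: \eqref{fvgen1} is only an \emph{upper} quasisymmetry inequality (your modulus $t\mapsto t+\eta(t)$), with no injectivity and no lower bound, so nested chords need not shrink by any fixed factor per generation. The paper sidesteps both issues at once: given an arbitrary partition, the $j$ largest chords extend to a coarser partition with at most $2j+1$ intervals, so Lemma~\ref{vargrowth} (valid for non-uniform partitions) bounds every partial sum $s_j$ of the decreasing rearrangement by $C\log(2j+2)\abs{f(a)-f(b)}$, and an Abel-summation lemma (Lemma~\ref{series}, where $q>1$ enters through convergence of $\sum_j \bigl((j+1)(\log(j+1))^{q}\bigr)^{-1}$) converts these rank-wise bounds into $\sum_j\phi(d_j)\le C'$. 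So the missing ideas are (i) the coarsening form of the growth estimate for arbitrary partitions and (ii) the decreasing-rearrangement/partial-sum formulation; as written, your plan has a genuine gap at exactly the step you flagged as the main obstacle. (A side remark: the restriction $q>1$ comes from the series convergence in the Abel step, not from the crude bounds $\phi(t)\le t$ and $\phi'\le 1$; the known obstruction is only $q\ge 1/2$, cf.\ Remark~\ref{negphi}.)
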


Before proving Theorem~\ref{fvgen} we derive Theorem~\ref{fvar} from it.

\begin{proof}[Proof of Theorem~\ref{fvar}] Let $f\colon\C \to\C$ be a reduced quasiconformal
mapping. We may assume that $f$ is nonlinear. For any $\lambda \in \R $ the mapping $f^{\lambda}(z)=f(z)+i\lambda z$ also satisfies the reduced distortion inequality~\ref{BeIn} with the same constant $k$ as $f$. By~\cite[Cor. 1.5]{IKO} $f^\lambda$ is a homeomorphism. Therefore, $f^\lambda$ is   $K$-quasiconformal with $K$ independent of $\lambda$.  Since quasiconformality implies quasisymmetry in $\C$~\cite[Thm 11.14]{Hebook}, there is a homeomorphism  $\eta\colon[0,\infty) \to[0,\infty)$ such that  $f^\lambda$ is $\eta$-quasisymmetric in $\C$ for all $\lambda \in \R$.   Given distinct points $a,b,c\in \R$, let $\lambda=-\im\frac{f(b)-f(a)}{b-a}$, so that $\abs{f^{\lambda}(b)-f^{\lambda}(a)}=\Delta_f(a,b)$.  Since   $f^{\lambda} $   is $\eta$-quasisymmetric,  we have
\[
\abs{f^{\lambda}(c)-f^{\lambda}(a)}\le \eta\left(\frac{\abs{c-a}}{\abs{b-a}}\right)\abs{f^{\lambda}(b)-f^{\lambda}(a)},
\]
from which~\eqref{fvgen1} follows by means of the triangle inequality. It remains to apply Theorem~\ref{fvgen} to $f$.
\end{proof}

Our proof of Theorem~\ref{fvgen} is based on two lemmas.

\begin{lemma}\label{vargrowth} Let $f\colon \Omega\to\R^n$ be as in Theorem~\ref{fvgen}. Given
distinct points $a,b\in \Omega$ and a partition $0=t_0<t_1<\dots<t_N=1$, let $a_j=a+t_j(b-a)$, $j=0,\dots,N$.
If $a_j\in\Omega$ for all $j$, then
\begin{equation}\label{vargrowth1}
\sum_{j=1}^N \abs{f(a_j)-f(a_{j-1})}\le C\log (N+1)\abs{f(a)-f(b)}
\end{equation}
where the constant $C$ depends only on $\eta$ in~\eqref{fvgen1}.
\end{lemma}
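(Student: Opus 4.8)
The plan is to build a balanced binary tree whose leaves are the $N$ consecutive segments $[a_{j-1},a_j]$, to read off a one–step ``splitting inequality'' from~\eqref{fvgen1} at each internal node, and then to sum these inequalities over the whole tree. The logarithmic factor $\log(N+1)$ will come out as the depth $\lceil\log_2 N\rceil$ of the tree; the decisive point will be that the accumulated error terms do \emph{not} compound multiplicatively from level to level, because $\Delta_f$ is additive along the line.

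First I would extract two consequences of~\eqref{fvgen1}. Letting $c\to b$ in~\eqref{fvgen1} (the ratio tends to $1$, and $f$ is continuous in all our applications) gives $0\le\eta(1)\Delta_f(a,b)$; since $\eta(1)>0$ this forces $\Delta_f(a,b)\ge0$, i.e.\ $f$ is monotone. Next I would record that $\Delta_f$ is symmetric, $\Delta_f(a,b)=\Delta_f(b,a)$, and, writing $u=(b-a)/\abs{b-a}$ and $\Delta_f(x,y)=\langle f(y)-f(x),u\rangle$ for points $x$ before $y$ on $[a,b]$, that it is \emph{additive} along the line: $\Delta_f(x,y)=\Delta_f(x,z)+\Delta_f(z,y)$ whenever $z$ lies between $x$ and $y$. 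The splitting inequality then follows by applying~\eqref{fvgen1} to the triples $(x,y,z)$ and $(y,x,z)$ and adding: if $z\in[x,y]$ with $\rho=\abs{z-x}/\abs{y-x}$, then, using $\eta(\rho)+\eta(1-\rho)\le 2\eta(1)$ (valid since $\rho,1-\rho\le1$, $\eta$ is increasing, and $\Delta_f\ge0$),
\[
\abs{f(z)-f(x)}+\abs{f(z)-f(y)}\le \abs{f(x)-f(y)}+2\eta(1)\,\Delta_f(x,y).
\]

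Assigning to each node $v=[a_i,a_k]$ the quantities $c(v)=\abs{f(a_k)-f(a_i)}$ and $d(v)=\Delta_f(a_i,a_k)$, the splitting inequality reads $c(v_L)+c(v_R)\le c(v)+2\eta(1)\,d(v)$ for the two children of an internal node. I build the tree by repeatedly cutting each block of consecutive segments into two parts of nearly equal cardinality at one of the given points $a_j$ (all of which lie in $\Omega$ by hypothesis), so the depth is at most $\lceil\log_2 N\rceil$. Summing the splitting inequality over all internal nodes, the $c$–terms telescope to $\sum_{\mathrm{leaves}}c-c(\mathrm{root})$, giving
\[
\sum_{j=1}^N\abs{f(a_j)-f(a_{j-1})}\le \abs{f(a)-f(b)}+2\eta(1)\sum_{v\ \mathrm{internal}}d(v).
\]
Grouping the internal nodes by depth, those at a fixed depth correspond to pairwise disjoint subintervals of $[a,b]$, so by additivity and $\Delta_f\ge0$ their $d$–values sum to at most $d(\mathrm{root})=\Delta_f(a,b)\le\abs{f(a)-f(b)}$. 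With at most $\lceil\log_2 N\rceil$ depths this bounds $\sum_{v\ \mathrm{internal}}d(v)$ by $\lceil\log_2 N\rceil\,\abs{f(a)-f(b)}$, and~\eqref{vargrowth1} follows with $C$ depending only on $\eta(1)$.

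The main obstacle is securing the logarithmic, rather than polynomial, dependence on $N$: a naive recursive halving multiplies the defect by a fixed constant at each of the $\log_2 N$ levels and yields a power of $N$. What rescues the estimate is precisely the additivity of $\Delta_f$ along collinear points together with its non-negativity, which pins the total defect on each level to the single quantity $\Delta_f(a,b)\le\abs{f(a)-f(b)}$ instead of letting it accumulate. A secondary technical point is that the partition is arbitrary, so the tree must be built on the prescribed points $a_j$ with unequal split ratios $\rho$; this is why the coefficient has to be bounded by $2\eta(1)$ uniformly in $\rho$, which in turn depends on the monotonicity extracted from~\eqref{fvgen1}.
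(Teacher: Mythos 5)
Your proof is correct and is essentially the paper's own argument read top-down: your balanced binary tree with the splitting inequality $c(v_L)+c(v_R)\le c(v)+2\eta(1)\,d(v)$ is precisely the paper's reduction to $N=2^m$ followed by iterated pairwise coarsening of the partition, resting on the same two key points --- additivity of $\Delta_f$ along the segment pins each level's total defect to $2\eta(1)\Delta_f(a,b)\le 2\eta(1)\abs{f(a)-f(b)}$, and the tree depth supplies the factor $\log(N+1)$. Your explicit extraction of $\Delta_f\ge 0$ from~\eqref{fvgen1} (needed to replace $\eta(\rho)$ by $\eta(1)$ at unequal split ratios) only makes visible a step the paper uses silently, so nothing essential differs.
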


\begin{proof} It suffices to prove~\eqref{vargrowth1} for $N=2^m$. For
$j=1,\dots,2^{m-1}$ we apply~\eqref{fvgen1} to the points $a_{2j}, a_{2j-1}, a_{2j-2}$ and find that
\[
\abs{f(a_{2j})-f(a_{2j-1})}\le \frac{\abs{a_{2j}-a_{2j-1}}}{\abs{a_{2j}-a_{2j-2}}}\abs{f(a_{2j})-f(a_{2j-2})}
+\eta(1)\Delta_f(a_{2j},a_{2j-2})
\]
and
\[
\abs{f(a_{2j-1})-f(a_{2j-2})}\le \frac{\abs{a_{2j-1}-a_{2j-2}}}{\abs{a_{2j}-a_{2j-2}}}\abs{f(a_{2j})-f(a_{2j-2})}
+\eta(1)\Delta_f(a_{2j},a_{2j-2}).
\]
Adding these inequalities we obtain
\[
\begin{split}
 \abs{f(a_{2j})-f(a_{2j-1})} & + \abs{f(a_{2j-1})-f(a_{2j-2})} \\ & \le
\abs{f(a_{2j})-f(a_{2j-2})}+2\eta(1)\Delta_f(a_{2j},a_{2j-2}).
\end{split}
\]
Observe that $\Delta_f(x,y)+\Delta_f(y,z)= \Delta_f(x,z)$ for all $y\in [x,z]$.
Therefore, summation over $j=1,\dots,2^{m-1}$ yields
\begin{equation}\label{vargrowth9}
\sum_{j=1}^{2^m} \abs{f(a_j)-f(a_{j-1})}\le \sum_{j=1}^{2^{m-1}} \abs{f(a_{2j})-f(a_{2j-2})}+2\eta(1)\Delta_f(a,b).
\end{equation}
Notice that the sum on the right hand side involves only even indices.
Next we apply~\eqref{vargrowth9} to the partition $a_0,a_2,\dots,a_{2^m}$ to rarefy it further.
After $m$ steps we arrive at
\[
\sum_{j=1}^{2^m} \abs{f(a_j)-f(a_{j-1})}\le \abs{f(a)-f(b)}+2m\eta(1)\Delta_f(a,b)\le (2m+1)\abs{f(a)-f(b)}.
\]
This completes the proof of~\eqref{vargrowth1}.
\end{proof}

The following lemma will allow us to derive the conclusion of
Theorem~\ref{fvgen} from the growth estimate~\eqref{vargrowth1}.

\begin{lemma}\label{series}
Suppose that $d_1\ge d_2\ge \dots\ge d_N$ are positive numbers such that for $j=1,\dots,N$
the partial sum $s_j:=d_1+\dots+d_j$ is bounded by $C\log (j+1)$, where $C$ is a constant.
Let $q>1$ and the function $\phi$ be defined by the formula~\eqref{fvar1}. Then
\begin{equation}\label{series1}
\sum_{j=1}^N \phi(d_j)\le C'
\end{equation}
where $C'$ depends only on $C$ and $q$.
\end{lemma}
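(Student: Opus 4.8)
The plan is to write each summand in the factored form $\phi(d_j)=d_j\,b_j$, where $b_j:=(\log(e+1/d_j))^{-q}$, and then to exploit the hypothesis $s_j\le C\log(j+1)$ through summation by parts. Two elementary observations set this up. First, since the $d_j$ decrease, the quantities $1/d_j$ increase, so $b_j$ is a \emph{decreasing} sequence with $0<b_j\le 1$ (indeed $\log(e+1/d_j)>1$); this monotonicity is exactly what makes Abel's transformation effective. Second, again because $d_1\ge\dots\ge d_j$, we have $j\,d_j\le s_j\le C\log(j+1)$, which yields the pointwise bound
\[
d_j \le \frac{C\log(j+1)}{j}, \qquad j=1,\dots,N.
\]

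Next I would apply summation by parts to $\sum_{j=1}^N d_j b_j$. Writing $d_j=s_j-s_{j-1}$ (with $s_0=0$) and using $b_j-b_{j+1}\ge 0$ together with $s_j\le C\log(j+1)$ gives
\[
\sum_{j=1}^N d_j b_j = s_N b_N + \sum_{j=1}^{N-1} s_j(b_j-b_{j+1}) \le C\Big(\log(N+1)\,b_N + \sum_{j=1}^{N-1}\log(j+1)(b_j-b_{j+1})\Big).
\]
A second summation by parts, now shifting the differences back onto the weights $\log(j+1)$, collapses the right-hand side to
\[
C\Big(\log 2\cdot b_1 + \sum_{j=2}^N \log\tfrac{j+1}{j}\,b_j\Big) \le C\log 2 + C\sum_{j=2}^N \frac{b_j}{j},
\]
using $b_1\le 1$ and $\log\frac{j+1}{j}\le 1/j$. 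Thus the entire estimate is reduced to controlling $\sum_j b_j/j$ by a constant depending only on $C$ and $q$.

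For this last sum I would feed the pointwise bound on $d_j$ into $b_j$. From $d_j\le C\log(j+1)/j$ we obtain $1/d_j\ge j/(C\log(j+1))$, hence $\log(e+1/d_j)\ge \log\!\big(j/(C\log(j+1))\big)$, and the latter exceeds $\tfrac12\log j$ once $j$ is larger than some threshold $j_0=j_0(C)$. Therefore $b_j\le 2^q(\log j)^{-q}$ for $j\ge j_0$, while trivially $b_j\le 1$ for the finitely many smaller indices. Splitting accordingly,
\[
\sum_{j=2}^N \frac{b_j}{j} \le \sum_{2\le j<j_0}\frac1j + 2^q\sum_{j\ge j_0}\frac{1}{j(\log j)^q},
\]
and the last series converges precisely because $q>1$ (compare with $\int^\infty dx/(x(\log x)^q)$). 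Collecting constants gives the asserted bound $C'=C'(C,q)$.

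The step carrying the real weight is the summation by parts. The crude alternative of bounding $\phi(d_j)\le\phi\big(C\log(j+1)/j\big)$ directly would only produce the series $\sum 1/(j(\log j)^{q-1})$, which converges for $q>2$ but fails for $1<q\le 2$. It is exactly the cancellation captured by Abel's transformation — trading the cumulative mass $s_j$ (bounded only by $C\log(j+1)$) against the slow decay of $b_j$ — that recovers the extra logarithmic factor and lowers the threshold to $q>1$, matching the hypothesis. I therefore expect the two successive applications of summation by parts, and the bookkeeping needed to keep all constants dependent only on $C$ and $q$, to be the crux of the argument.
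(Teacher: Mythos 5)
Your proof is correct and follows essentially the same route as the paper: both arguments rest on the pointwise bound $d_j\le C\log(j+1)/j$ to compare $\log(e+1/d_j)$ with $\log j$, on Abel summation played against the hypothesis $s_j\le C\log(j+1)$, and on the convergence of $\sum_j 1/\bigl(j(\log j)^q\bigr)$ for $q>1$. The only (cosmetic) difference is the order of operations — the paper first replaces the weights $(\log(e+1/d_j))^{-q}$ by $(\log(j+1))^{-q}$ and then sums by parts once, whereas you keep the weights $b_j$ abstract (using their monotonicity) and perform two summations by parts before substituting the bound.
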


\begin{proof} Since $d_j\le C\log(j+1)/j\le C\sqrt{j}$, it follows
that $\log(e+1/d_j)\ge C_1\log(j+1)$, where $C_1$ depends only on $C$. Therefore,
\begin{equation}\label{series2}
\sum_{j=1}^N \phi(d_j)\le C_1^{-q}\sum_{j=1}^N \frac{d_j}{(\log(j+1))^{q}}.
\end{equation}
Next we use summation by parts, replacing $d_j$ with $s_j-s_{j-1}$, where $s_0=0$ by convention.
\begin{equation}\label{series3}
\begin{split}
&\sum_{j=1}^N \frac{d_j}{(\log(j+1))^{q}} \\
&=\frac{s_N}{(\log(N+1))^{q}}+
\sum_{j=1}^{N-1}s_j\left(\frac{1}{(\log(j+1))^{q}}-\frac{1}{(\log(j+2))^{q}}\right)
\end{split}
\end{equation}
The first term on the right is bounded by $C/(\log (N+1))^{q-1}$.
Since $s_j\le C\log(j+1)$ and
\[\frac{1}{(\log(j+1))^{q}}-\frac{1}{(\log(j+2))^{q}}\le
\frac{q}{(j+1)(\log(j+1))^{1+q}},\]
it follows that
\begin{equation}\label{series4}
\begin{split}
&\sum_{j=1}^{N-1}s_j\left(\frac{1}{(\log(j+1))^{q}}-\frac{1}{(\log(j+2))^{q}}\right) \\
&\le C\sum_{j=1}^{\infty}\frac{1}{(j+1)(\log(j+1))^{q}}=:C_2
\end{split}
\end{equation}
where $C_2$ depends only on $C$ and $q$. Combining~\eqref{series2}, \eqref{series3},
and \eqref{series4}, we obtain~\eqref{series1}.
\end{proof}

\begin{proof}[Proof of Theorem~\ref{fvgen}]
Let $a,b\in\R^n$ be two distinct points.
Given a partition $(a_j)_{j=0}^N$ of $[a,b]$, let $d_1 \ge \dots \ge d_N$ be the numbers  $\abs{f(a_j)-f(a_{j-1})}$ arranged in the nonincreasing order.  By Lemma~\ref{vargrowth}  the partial sums $s_j=d_1+ \dots +d_j$ are bounded by   $C\log (2j+2)\abs{f(a)-f(b)}$, where $C$ is the constant in~\eqref{vargrowth1}.
Applying Lemma~\ref{series} we arrive at the conclusion of the theorem.
\end{proof}

\section{Failure of bounded variation: Proof of Theorem~\ref{rBexample}}

\begin{proof}
Let $Q\subset \C$ be the open square $\{x+iy\colon 2< x< 6, \abs{y}< 2\}$.
The closure of $Q$ contains two smaller closed squares $Q_1=\{x+iy\colon \abs{x-3}+\abs{y}\le 1\}$ and
$Q_2=\{x+iy\colon \abs{x-5}+\abs{y}\le 1\}$.

\begin{figure}[!h]
\begin{center}
\psfrag{q}{\small {${Q}$}}
\psfrag{q1}{\small ${Q_1}$}
\psfrag{q2}{\small ${Q_2}$}
\psfrag{2}{\small ${2}$}
\psfrag{4}{\small ${4}$}
\psfrag{6}{\small ${6}$}

\includegraphics*[height=2.0in]{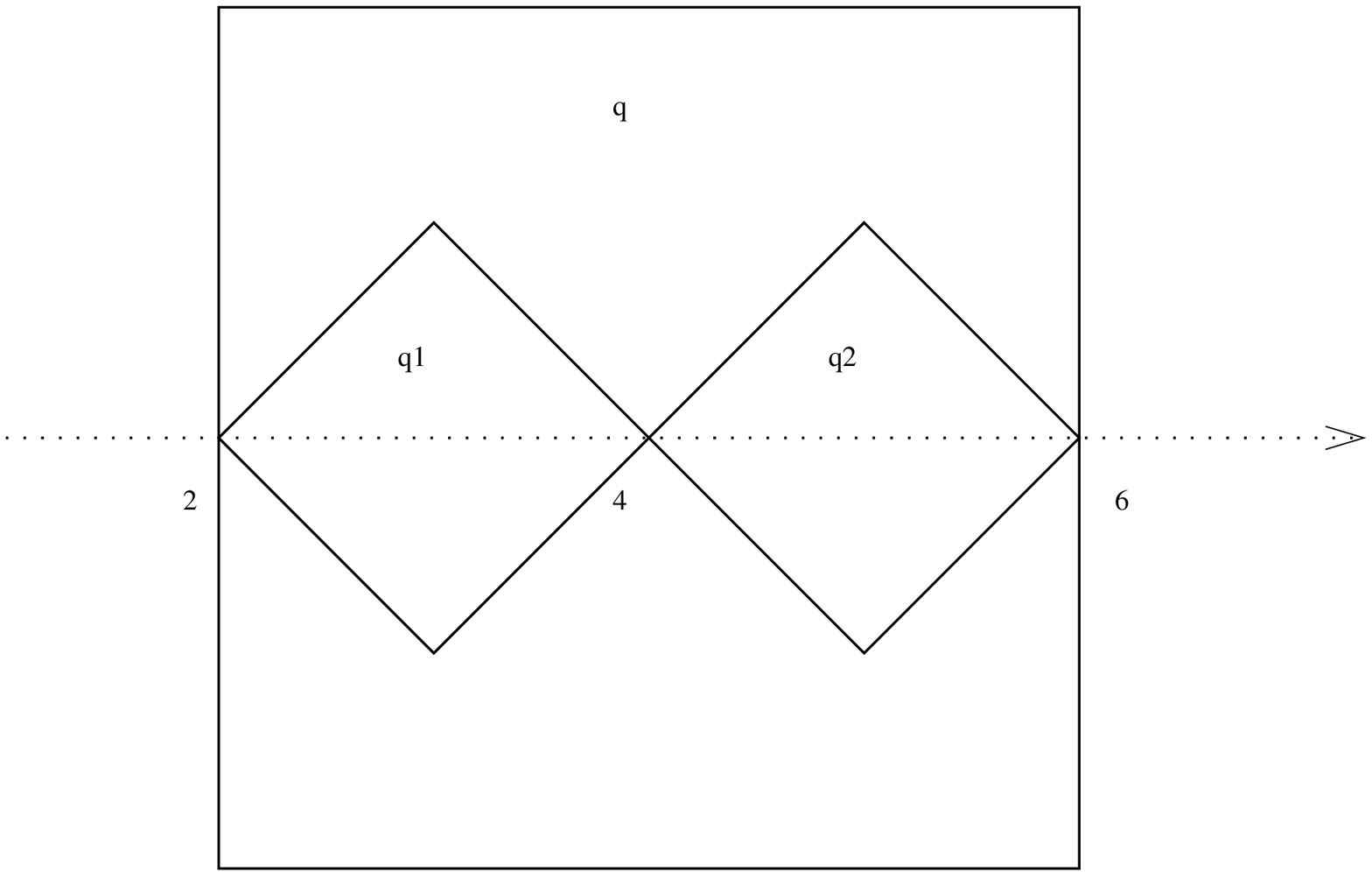}
\caption{}
\end{center}
\end{figure}

Let $g\colon \overline{Q}\to \C$ be a Lipschitz function such that
\[
g(z)=\begin{cases} i(z-2),\quad &z\in Q_1; \\
i(6-z), \quad &z\in Q_2; \\
0,\quad &z\in \partial Q.
\end{cases}
\]
Extend $g$ to the set $A=\bigcup_{k\in\Z}(\overline{Q}+8k)$ so that $g(z+8)=g(z)$. Finally, set $g(z)=0$ for $z\notin A$.

Let $L$ be the Lipschitz constant of $g$. We shall prove that for $0<\epsilon<1/(2L)$ the mapping
\begin{equation}\label{rBex1}
f(z)=z+\epsilon\sum_{m=0}^{\infty}4^{-m}g(4^m z),\quad z\in\C,
\end{equation}
satisfies
\begin{equation}\label{rBexdiff}
 \left| f_{\bar z} \right| \leqslant k \re f_z \quad \textnormal{a.e. } \quad \textnormal{ with } \quad  k=\frac{\epsilon L}{1-\epsilon L}.
 \end{equation}
First of all, the series in~\eqref{rBex1} converges uniformly because $g$ is bounded.
Let
\[B=\bigcup_{\ell \in\Z} \big\{  [Q\setminus(Q_1\cup Q_2)]+8\ell \big\} \]
and note that
\begin{equation}\label{Bprop2}
B\subset \bigcup_{j\in \Z}\{z\colon \abs{\re z-2j}<\abs{\im z}\}
\end{equation}
and
\begin{equation}\label{Bprop3}
B\cap \bigcup_{j\in \Z}\{z\colon \abs{\re z-8j}<\abs{\im z}\}=\varnothing.
\end{equation}
We claim that that for any $z\in \C$ there exists at most one integer $m\ge 0$ such that $4^m z\in B$.
Indeed, let $m_0$ be the smallest such integer. Replacing $z$ with $4^{m_0}z$, we may assume that $m_0=0$, i.e., $z\in B$.
According to~\eqref{Bprop2}, there exists $j\in\Z$ such that $\abs{\re z-2j}<\abs{\im z}$.
For any $m\ge 1$ the number $\zeta=4^mz$ satisfies
$\abs{\re \zeta-8\cdot 4^{m-1}j}<\abs{\im  \zeta}\}$, which implies $\zeta\notin B$ by virtue of~\eqref{Bprop3}.
This proves the claim.

Since both $\re g_z$ and $g_{\bar z}$ vanish a.e. outside of $B$, it follows that
\[
{ 1-\epsilon L \le \re f_z (z)} \le 1+ \epsilon L    ,\quad \abs{f_{\bar z}(z)} \le \epsilon L \quad \mbox{a.e. in } \C.
\]
This proves~\eqref{rBexdiff}. Since $g(z)=0$ when $\abs{\im z}>2$, it follows that
$|\im f_z(z)|\le \epsilon \, m  L$ when $\abs{\im z}\ge 2\cdot 4^{-m}$. This implies
\begin{equation}\label{Df}
\abs{Df(z)} \le C \log \left(e+ 1/\abs{\im z }\right)
\end{equation}
for some constant $C$. Hence $f\in W^{1,p}_{\textrm{loc}}(\C;\C)$
for all $p<\infty$. Therefore, $f$ is quasiregular. By~\cite[Cor. 1.5]{IKO} it is quasiconformal.
It is clear that $\re f(x)=x$ for all $x\in \R$.

It remains to prove that the function
\[h(x):=\epsilon^{-1}\im f(x)=-i \sum_{m=0}^{\infty}\frac{1}{4^{m}}g(4^mx),\quad x\in\R\]
has infinite variation on any nontrivial interval $[a,b]\subset\R$.
Due to the self-similar structure of $h$ it suffices to consider the interval $[0,8]$.
We will show that the sum
\[
V_N:=\sum_{j=0}^{4^N} \left|h\left(\frac{8j}{4^{N}}\right)-h\left(\frac{8j-8}{4^{N}}\right)\right|
\]
satisfies
\begin{equation}\label{nonr1}
V_N\ge c\sqrt{N}
\end{equation}
with an absolute constant $c>0$. Let $x_j=(8j)4^{-N}$, $j=0,\dots, 4^N$.
When $m\ge N$, we have $g(4^m x_j)=0$ for all $j$.
Therefore, in the definition of $V_N$ we can replace $h$ with the partial sum
\[h_N(x)=-i \sum_{m=0}^{N-1}\frac{1}{4^{m}}g(4^mx).\]
Since $h_N$ is affine on each interval $[x_{j-1},x_j]$, it follows that
$V_N=\displaystyle \int_0^8 \abs{h_N'(x)}\,dx$. We claim that for a.e. $x\in\R$
\begin{equation}\label{derg}
\frac{d}{dx} \left(-i g(x)\right) = \frac{1}{2} (s_{0}(x/8)-s_{1}(x/8)),
\end{equation}
where $s_m$ is the $m$th Rademacher function, defined by \[s_m(x)=\sign \sin(2^{m+1}\pi x).\]
Since both sides of~\eqref{derg} are periodic functions with period $8$, it suffices to check that equality holds a.e. on the interval $(0,8)$.
From the definitions of $g$ and $s_m$ one can see that that both sides of~\eqref{derg}
agree with $\chi_{[2,4]}-\chi_{[4,6]}$ when $0<x<8$ and $x\ne 2,4,6$. This proves~\eqref{derg}.
It then follows that
\[h_N'(x)=\frac12\sum_{m=0}^{N-1}(s_{2m}(x/8)-s_{2m+1}(x/8)).  \]
The $L^1$ norm of a Rademacher series  is comparable to the $\ell^2$ norm of its coefficients~\cite[Thm.  V.8.4]{Zy}.
Hence  $\displaystyle \int_0^8 \abs{h_N'(x)}\,dx\ge c\sqrt{N}$ with an absolute constant $c>0$.
This completes the proof of~\eqref{nonr1}.
\end{proof}

\begin{remark}\label{negphi} The mapping $f$ constructed in the proof of Theorem~\ref{rBexample}
does not have finite $\phi$-variation on lines with $\phi$ as in~\eqref{fvar1} for $0\le q<1/2$.
\end{remark}
\begin{proof} Using Jensen's inequality and the estimate~\eqref{nonr1}, we obtain
\[
\begin{split}
\sum_{j=0}^{4^N} \phi(\abs{h(x_j)-h(x_{j-1})})&\ge 4^N\phi(V_N/4^N) \\
&\ge \frac{c\sqrt{N}}{(\log(e+4^N/(c\sqrt{N}))^{q}}\to\infty
\end{split}
\]
as $N\to\infty$.
\end{proof}

\begin{proof}[Proof of Corollary~\ref{countlines}]  We may assume that the lines $L_j$ are parallel to the the real axis; that is, $L_j= \{z \in \C \colon \im z=b_j\}$ where $b_j$ are distinct real numbers. For $m=1,2,\dots$ let $\epsilon_m= \min_{1 \le j < \ell \le m} \abs{b_j- b_\ell}$ and choose $c_m > 0$ so that
\begin{equation}\label{cm1}
c_m \abs{b_m} < 2^{-m}
\end{equation}
and
\begin{equation}\label{cm2}
c_m \log \left(e + 1/\epsilon_m\right) < 2^{-m}.
\end{equation}
We define
\begin{equation}\label{mapF}
F(z)= \sum_{m=1}^\infty c_m f(z-ib_m)
\end{equation}
where $f$ is the  mapping in~\eqref{rBex1}. Note that $\abs{f(z)} \le \abs{z}+M$ for some constant $M$. The sum in~\eqref{mapF} converges locally  uniformly  because by~\eqref{cm1}--\eqref{cm2}
\[
\abs{ c_m f(z-ib_m)} \le c_m \left(\abs{z}+ \abs{b_m}+M\right)\le 2^{-m} \left( \abs{z} +1 +M\right) .
\]
Therefore $F$ is quasiconformal~\cite[II 5.3]{LV}. We claim for every $j=1,2,\dots$ the sum
\[R_j(z)=\sum_{m\ne j}  c_m f(z-ib_m)\]
is  Lipschitz on the line $L_j$. By~\eqref{Df} the restriction of $f(z-ib_m)$ to $L_j$ is Lipschitz with a constant $C\log \left( e+ 1/\abs{b_m-b_j} \right)$. We estimate the Lipschitz constant of $R_j$  by
\[ 
\sum_{m \ne j} c_m \log \left( e+ 1/\abs{b_m-b_j} \right)  \le \sum_{m<j} c_m \log \left( e+ 1/ \epsilon_j\right) +  \sum_{m>j} c_m \log \left( e+ 1/ \epsilon_m \right)  .
\]
The first sum on the right has finitely many terms, and the second sum converges by~\eqref{cm2}.  
 Since $F(z)=c_j f(z-ib_j)+R_j$, it follows that $F(L_j)$ is not locally rectifiable at any of its points.
\end{proof}

\section{Reduced quasiconformal mappings in four dimensions}\label{higher}

Our first goal in this section is to reformulate the definition of reduced quasiconformal mappings (Definition~\ref{redqcdef})
in terms of differential matrices $Df(x)\in \M_n$ rather than complex derivatives. This is done in Proposition~\ref{realred} below. We write $\M_n$ for the set of all $n\times n$ matrices with real entries. Also, for $\delta\in [-1,1]$ we
let
\[\M_n(\delta)=\{A\in\M_n\colon \inn{Av,v}\ge \delta \abs{Av}\abs{v} \text{ for all }
v\in\R^n\}.\]

\begin{proposition}\label{deltaqc} For $\delta\in (0,1)$ let $H(\delta)= \frac{1+ \sqrt{1-\delta^2}}{1- \sqrt{1-\delta^2}}$. Then
$\norm{A}\norm{A^{-1}}\le H(\delta)$ for all nonzero matrices $A\in \M_n(\delta)$.
\end{proposition}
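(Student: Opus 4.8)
The plan is to identify $\norm{A}\norm{A^{-1}}$ with the ratio $\sigma_1/\sigma_n$ of the largest to the smallest singular value of $A$, and to show that membership in $\M_n(\delta)$ forces this ratio to stay below $H(\delta)$. First I would check that $A$ is invertible, so that $\sigma_n>0$ and the quantity $\norm{A^{-1}}=1/\sigma_n$ makes sense. If $Av_0=0$ for some unit vector $v_0$, then testing the defining inequality on $v=v_0+tw$ and letting $t\to 0^+$ and $t\to 0^-$ (so that $Av=tAw$ to leading order) yields $\inn{Aw,v_0}\ge\delta\abs{Aw}$ and $-\inn{Aw,v_0}\ge\delta\abs{Aw}$ simultaneously, hence $\abs{Aw}=0$ for every $w$. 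This contradicts $A\ne 0$, so $A$ is invertible.

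Next comes a reduction to two dimensions. Let $v_1,v_n$ be orthonormal right singular vectors of $A$ for $\sigma_1$ and $\sigma_n$ (eigenvectors of $A^{T}A$), and let $u_1=Av_1/\sigma_1$, $u_n=Av_n/\sigma_n$ be the corresponding left singular vectors, which are orthonormal. For the unit vectors $v_\pm=\cos\phi\,v_1\pm\sin\phi\,v_n$ one has $\abs{Av_\pm}^2=\sigma_1^2\cos^2\phi+\sigma_n^2\sin^2\phi$, since the cross term vanishes by $u_1\perp u_n$. The inner product $\inn{Av_\pm,v_\pm}$, however, carries the inconvenient off-diagonal quantities $\inn{u_1,v_n}$ and $\inn{u_n,v_1}$, which I do not control individually, and which enter with opposite signs for the two choices of sign.

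The key step is to eliminate those cross terms by \emph{adding} the defining inequalities for $v_+$ and $v_-$: the off-diagonal contributions cancel, leaving
\[
\sigma_1\inn{u_1,v_1}\cos^2\phi+\sigma_n\inn{u_n,v_n}\sin^2\phi\ge \delta\sqrt{\sigma_1^2\cos^2\phi+\sigma_n^2\sin^2\phi}.
\]
Using the Cauchy--Schwarz bounds $\inn{u_1,v_1}\le 1$ and $\inn{u_n,v_n}\le 1$ on the left, I obtain the clean scalar inequality
\[
\sigma_1\cos^2\phi+\sigma_n\sin^2\phi\ge \delta\sqrt{\sigma_1^2\cos^2\phi+\sigma_n^2\sin^2\phi}\qquad\text{for all }\phi.
\]
Finally I would set $r=\sigma_1/\sigma_n\ge 1$ and $t=\cos^2\phi\in[0,1]$; after dividing by $\sigma_n$ and squaring, the substitution $s=(r-1)t\in[0,r-1]$ turns the requirement into nonnegativity on $[0,r-1]$ of the upward quadratic $s^2+(2-\delta^2(r+1))s+(1-\delta^2)$. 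Its interior minimum is the binding constraint, and the condition that this minimum be nonnegative is exactly $(\delta^2(r+1)-2)^2\le 4(1-\delta^2)$, which rearranges (writing $\sqrt{1-\delta^2}$ for brevity) to $r\le \frac{1+\sqrt{1-\delta^2}}{1-\sqrt{1-\delta^2}}=H(\delta)$.

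I expect the main obstacle to be precisely the off-diagonal inner products $\inn{u_1,v_n}$ and $\inn{u_n,v_1}$: the honest two-dimensional compression of $A$ is $\delta$-monotone but can shrink $\sigma_1$, so a naive compression argument pushes the ratio the wrong way. The symmetrization over $v_+$ and $v_-$ is what disposes of these terms, after which the whole statement collapses to the elementary one-variable optimization above; verifying that the interior minimum (rather than an endpoint) is the active constraint for $r$ near $H(\delta)$ is a routine but necessary check.
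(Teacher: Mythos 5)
Your proof is correct, and it takes a genuinely different route from the paper's. The paper's proof is two sentences long: for $n=2$ it cites the planar computation of~\cite[p.~84]{AIMbook} (carried out in conformal--anticonformal coordinates, cf.\ Proposition~\ref{compdelta}), and for $n\ge 3$ it reduces to that case by passing to the plane $V$ spanned by unit vectors $v,w$ with $\abs{Av}=\norm{A}$ and $\abs{Aw}=\norm{A^{-1}}^{-1}$. Your argument is instead self-contained and dimension-free: the singular-vector setup, the symmetrization over $v_{\pm}=\cos\phi\,v_1\pm\sin\phi\,v_n$ which cancels the cross terms $\inn{u_1,v_n}$ and $\inn{u_n,v_1}$, and the one-variable quadratic analysis produce the bound $H(\delta)$ directly in every dimension, with no appeal to complex notation. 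It is worth noting that the obstacle you single out is exactly what the paper's one-line reduction glosses over: the compression $P_V A|_V$ does belong to $\M_2(\delta)$ on $V$, but its top singular value need not equal $\norm{A}$, since $Av$ need not lie in $V$; a priori one only gets $\abs{P_V Av}\ge\delta\norm{A}$, so the naive compression argument yields the weaker bound $H(\delta)/\delta$. Your symmetrization removes precisely the off-diagonal terms responsible for that loss. What the paper's route buys is brevity and reuse of a known planar result; what yours buys is a complete, elementary proof of the stated constant valid in all dimensions at once, including an explicit verification that nonzero matrices in $\M_n(\delta)$ are invertible, which the statement tacitly assumes. Finally, the endpoint check you flag does go through: if $r>H(\delta)$ then $\delta^2(r+1)-2>2\sqrt{1-\delta^2}>0$, so the vertex $s^*=\tfrac12\left(\delta^2(r+1)-2\right)$ is positive and the quadratic is negative there, while $s^*<r-1$ amounts to $\delta^2<r(2-\delta^2)$, which holds for $r\ge 1$; hence the interior minimum is indeed the active constraint and your contradiction argument closes.
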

\begin{proof}
For $n=2$ this proposition was proved in~\cite[p.84]{AIMbook}. If $n \ge 3$, let $v$ and $w$ be distinct unit  vectors such that $\abs{Av}= \norm{A}$ and $\abs{Aw}= \norm{A^{-1}}^{-1}$. Applying the two-dimensional case to the subspace spanned by $v$ and $w$, we arrive at the desired conclusion.
\end{proof}

A matrix $A\in \M_2$ determines a linear mapping $x\mapsto Ax$ of the plane $\R^2$. The same linear mapping
can be written as $z\mapsto \alpha^+ z+\alpha^-\bar z$ for some $\alpha^+,\alpha^- \in\C$. The numbers $\alpha^+$ and $\alpha^-$ can be thought of as the conformal and anticonformal parts of $A$ (cf.~\cite{FS}).

\begin{proposition}\label{compdelta}~\cite[Thm. 3.11.6]{AIMbook} A matrix $A$ belongs to
$\M_2(\delta)$ if and only if $\abs{\alpha^-}+\delta\abs{\im \alpha^+}\le \sqrt{1-\delta^2}\re \alpha^+$.
\end{proposition}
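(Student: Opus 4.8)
The plan is to pass to complex notation and reinterpret the quadratic-form condition geometrically. Since both sides of $\inn{Av,v}\ge\delta\abs{Av}\abs{v}$ are homogeneous of degree two in $v$, it suffices to verify it for unit vectors, which I identify with unit complex numbers $v$. For such $v$, writing $Av=\alpha^+ v+\alpha^-\bar v$ and dividing by $v$ gives
\[
w:=\frac{Av}{v}=\alpha^++\alpha^-\frac{\bar v}{v}=\alpha^++\alpha^- e^{-2i\arg v}.
\]
As $v$ runs over the unit circle, $\bar v/v$ runs over the whole unit circle, so $w$ traces the circle $C$ centered at $\alpha^+$ of radius $\abs{\alpha^-}$. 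A direct computation gives $\inn{Av,v}=\re(Av\,\bar v)=\re w$ and $\abs{Av}\abs{v}=\abs{w}$ (recall $\abs{v}=1$), so membership $A\in\M_2(\delta)$ is equivalent to $\re w\ge\delta\abs{w}$ for every $w\in C$.

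Next I would identify the set $\{w:\re w\ge\delta\abs{w}\}$. Writing $w=\abs{w}e^{i\varphi}$, the inequality reads $\cos\varphi\ge\delta$, i.e.\ $\abs{\varphi}\le\beta$ with $\beta=\arccos\delta$. Because $\delta\in(0,1)$ we have $0<\beta<\pi/2$, so this set is the closed convex sector $S_\beta$ of half-angle $\beta$ about the positive real axis, and $S_\beta$ is the intersection of the two half-planes $\{\im(we^{-i\beta})\le 0\}$ and $\{\im(we^{i\beta})\ge 0\}$. Thus $A\in\M_2(\delta)$ if and only if $C\subset S_\beta$, which by convexity holds precisely when $C$ lies in each of the two half-planes.

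Finally, containment of a circle of center $\alpha^+$ and radius $\abs{\alpha^-}$ in a half-plane is a one-line distance condition via the maximum of an affine functional on a circle: $C\subset\{\im(we^{-i\beta})\le0\}$ iff $\im(\alpha^+e^{-i\beta})+\abs{\alpha^-}\le 0$, and $C\subset\{\im(we^{i\beta})\ge 0\}$ iff $\im(\alpha^+e^{i\beta})-\abs{\alpha^-}\ge 0$. Expanding $\im(\alpha^+e^{\mp i\beta})=\mp\re\alpha^+\sin\beta+\im\alpha^+\cos\beta$ and combining the two inequalities (taking the worse sign of $\im\alpha^+$) yields $\abs{\alpha^-}+\abs{\im\alpha^+}\cos\beta\le\re\alpha^+\sin\beta$; substituting $\cos\beta=\delta$ and $\sin\beta=\sqrt{1-\delta^2}$ gives exactly $\abs{\alpha^-}+\delta\abs{\im\alpha^+}\le\sqrt{1-\delta^2}\,\re\alpha^+$.

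The one place to be careful — the main obstacle — is the convexity point: reducing ``$C$ inside the sector'' to the two independent scalar distance conditions is valid only because $\delta>0$ forces the half-angle $\beta<\pi/2$, making $S_\beta$ convex; for $\delta\le 0$ the sector would be nonconvex and intersecting the two half-planes would no longer capture containment, so the argument genuinely uses $\delta\in(0,1)$. The identification of $w=Av/v$ with a circle and the verifications $\inn{Av,v}=\re w$, $\abs{Av}\abs{v}=\abs{w}$ are the remaining computations to get right, but they are routine.
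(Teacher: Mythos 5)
Your proof is correct. All the steps check out: the reduction to unit vectors by homogeneity; the identities $\inn{Av,v}=\re w$ and $\abs{Av}\abs{v}=\abs{w}$ for $w=Av/v$; the identification of the image of the unit circle under $v\mapsto Av/v$ with the circle $C$ of center $\alpha^+$ and radius $\abs{\alpha^-}$; the description of $\{w\colon \re w\ge \delta\abs{w}\}$ as the intersection of the two half-planes (valid exactly because $\beta=\arccos\delta<\pi/2$ when $\delta>0$); and the distance computation, whose two inequalities combine to $\abs{\alpha^-}+\delta\abs{\im\alpha^+}\le\sqrt{1-\delta^2}\,\re\alpha^+$. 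Every step is an equivalence, so both directions of the proposition follow, and the degenerate case $\alpha^-=0$ is handled automatically.

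One point of comparison is moot: the paper does not prove this proposition at all, but simply cites Theorem 3.11.6 of Astala--Iwaniec--Martin, so your argument supplies a self-contained proof of a fact the authors import. Your route --- membership in $\M_2(\delta)$ as containment of a circle in a convex sector, then two affine support conditions --- is the natural geometric way to obtain exactly this inequality, and your closing remark correctly isolates where positivity of $\delta$ is used: for $\delta\le 0$ the region $\{w\colon\re w\ge\delta\abs{w}\}$ is no longer an intersection of two half-planes, so the reduction to the two scalar conditions would break down, consistent with the paper's standing assumption $\delta\in(0,1)$.
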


A complex number $a+ib\in\C$ can be identified with the $2\times 2$ matrix
$Z=\left(\begin{smallmatrix}a & -b \\ b & a\end{smallmatrix}\right)$.
Thus we may consider $\C$ as a linear subspace of $\mathbb{M}_2$.
Within this subspace each matrix decomposes into real and imaginary parts:
$\re Z = \left(\begin{smallmatrix}a & 0 \\ 0 & a\end{smallmatrix}\right)$ and
$\im Z = \left(\begin{smallmatrix}0 & -b \\ b & 0\end{smallmatrix}\right)$.
Given $A\in\mathbb M_2$, let $\C(A)$ be the orthogonal projection of $A$ onto the subspace $\C$.

\begin{proposition}\label{realred} A mapping $f\in W_{\loc}^{1,2}(\R^2;\R^2)$ is reduced
quasiconformal in the sense of Definition~\ref{redqcdef} if and only if there exists $\delta>0$ such that for a.e. $x\in\R^2$ the derivative $A=Df(x)$ satisfies $A-\im \C(A)\in \mathbb M_2(\delta)$.
\end{proposition}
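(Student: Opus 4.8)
The plan is to translate the complex-analytic condition \eqref{BeIn} defining reduced quasiconformality into the matrix language via the conformal/anticonformal decomposition, and then match it against the geometric cone condition $A-\im\C(A)\in\M_2(\delta)$ using Proposition~\ref{compdelta}. The key computational fact is the dictionary between the complex derivatives of $f$ and the numbers $\alpha^+,\alpha^-$ attached to $A=Df(x)$. First I would recall that writing the linear map $A$ as $z\mapsto \alpha^+z+\alpha^-\bar z$ identifies $\alpha^+=f_z$ and $\alpha^-=f_{\bar z}$ (evaluated at the point $x$). Under the embedding $\C\hookrightarrow\M_2$, the orthogonal projection $\C(A)$ is exactly the conformal part $z\mapsto\alpha^+z$, so $\im\C(A)$ corresponds to the matrix of $z\mapsto i(\im\alpha^+)z$. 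Subtracting this from $A$ produces a new linear map whose conformal part has been made real, i.e.\ whose associated numbers are $(\re\alpha^+,\alpha^-)$.

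Once this dictionary is in place, the proof reduces to applying Proposition~\ref{compdelta} to the modified matrix $B:=A-\im\C(A)$. Its conformal and anticonformal parts are $\beta^+=\re\alpha^+=\re f_z$ (a nonnegative real number, after the sign normalization coming from~\cite[Thm 6.3.2]{AIMbook}) and $\beta^-=\alpha^-=f_{\bar z}$. By Proposition~\ref{compdelta}, $B\in\M_2(\delta)$ if and only if
\[
\abs{\beta^-}+\delta\abs{\im\beta^+}\le\sqrt{1-\delta^2}\,\re\beta^+.
\]
Since $\im\beta^+=0$ by construction, this simplifies to $\abs{f_{\bar z}}\le\sqrt{1-\delta^2}\,\re f_z$. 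The forward direction then follows: given~\eqref{BeIn} with constant $k<1$, choose $\delta>0$ so that $\sqrt{1-\delta^2}=k$, i.e.\ $\delta=\sqrt{1-k^2}>0$, and the inequality holds a.e.\ with this $\delta$. Conversely, if $B\in\M_2(\delta)$ a.e.\ for some $\delta>0$, the simplified inequality gives $\abs{f_{\bar z}}\le k\,\re f_z$ with $k=\sqrt{1-\delta^2}<1$, which is precisely~\eqref{BeIn}; together with the standing hypothesis $f\in W^{1,2}_{\loc}$ and the requirement that $f$ be a homeomorphism, this is Definition~\ref{redqcdef}.

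I expect the main obstacle to be bookkeeping rather than depth: verifying that the orthogonal projection $\C(A)$ genuinely coincides with the conformal part $z\mapsto\alpha^+z$, and that $\im\C(A)$ picks out only the imaginary part of $\alpha^+$ while leaving $\alpha^-$ untouched. This requires writing $A$ in the basis induced by the splitting $\M_2=\C\oplus\C^{\perp}$ and confirming orthogonality with respect to the inner product used to define $\C(A)$; the matrices $\re Z$ and $\im Z$ displayed before Proposition~\ref{realred} make this explicit, since $\im\C(A)=\left(\begin{smallmatrix}0 & -\im\alpha^+\\ \im\alpha^+ & 0\end{smallmatrix}\right)$ subtracts off exactly the antisymmetric conformal piece. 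A secondary point requiring a word of care is the sign of $\re f_z$: Definition~\ref{redqcdef} via~\eqref{BeIn} presupposes $\re f_z>0$, and in the converse direction one must note that the cone condition forces $\re\beta^+\ge 0$, so after the normalization supplied by~\cite[Thm 6.3.2]{AIMbook} the two formulations agree up to the choice of sign already discussed in the text.
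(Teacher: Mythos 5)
Your proposal is correct and takes essentially the same route as the paper: write $A-\im\C(A)$ in conformal--anticonformal coordinates as $(\re\alpha^+,\alpha^-)$ and apply Proposition~\ref{compdelta}, which with $\im\beta^+=0$ reduces to $\abs{f_{\bar z}}\le\sqrt{1-\delta^2}\,\re f_z$, i.e.~\eqref{BeIn} with $k=\sqrt{1-\delta^2}$. The sign issue you flag is in fact vacuous, since~\eqref{BeIn} itself already forces $\re f_z\ge 0$ a.e., so no separate normalization is needed.
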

\begin{proof} Writing the matrix $A=Df(x)$ in conformal-anticonformal coordinates as $(\alpha^+,\alpha^-)$, we observe
that $A-\im \C(A)$ corresponds to $(\re \alpha^+,\alpha^-)$. According to Proposition~\ref{compdelta}, the condition
$A-\im \C(A)\in \mathbb M_2(\delta)$ is equivalent to $\abs{\alpha^-}\le \sqrt{1-\delta^2}\re \alpha^+$.
The latter is inequality is the same as~\eqref{BeIn} with $k=\sqrt{1-\delta^2}$.
\end{proof}

A quaternion $\alpha+\beta\mathbf{i}+\gamma\mathbf{j}+\zeta\mathbf{k}$ can be identified with a $4\times 4$ real matrix
\begin{equation}\label{quat1}
Q=\begin{pmatrix} \alpha&-\beta&-\gamma&-\zeta \\ \beta&\alpha&-\zeta&\gamma \\ \gamma&\zeta&\alpha&-\beta \\  \zeta&-\gamma&\beta&\alpha
\end{pmatrix}
\end{equation}
With this identification we consider the set of quaternions $\mathbb H$ as a subset of $\M_4$.
Since quaternion conjugation corresponds to matrix transposition, we have $Q^TQ=\norm{Q}^2I$, where
$\norm{Q}$ is the operator norm of matrix $Q$, also equal to the absolute value of the quaternion.
Consequently, $\abs{Qv}=\norm{Q}\abs{v}$ for any vector $v\in\R^4$.

A quaternion $Q$ is the sum of its real (scalar) and imaginary parts:
\begin{equation}\label{quat1n}
\re Q=\begin{pmatrix} \alpha&0&0&0 \\ 0&\alpha&0&0 \\ 0&0&\alpha&0 \\  0&0&0&\alpha
\end{pmatrix},\quad \im Q=Q-\re Q.
\end{equation}
If $\re Q=0$, the quaternion $Q$ is purely imaginary.
For a matrix $A\in\mathbb M_4$, we define $\HH(A)$ to be the orthogonal projection of $A$ onto the subspace
$\HH\subset \mathbb M_4$.

\begin{definition}\label{quatred} A homeomorphic mapping $f\in W_{\loc}^{1,4}(\R^4;\R^4)$
is \textit{reduced quasiconformal} if there exists $\delta>0$ such that for a.e. $x\in\R^4$ the derivative
$A=Df(x)$ satisfies $A-\im \HH(A)\in \mathbb M_4(\delta)$.
\end{definition}

First of all, we need to justify the terminology by proving the following proposition.

\begin{proposition}\label{quatprop} Any reduced quasiconformal mapping $f\colon\R^4\to\R^4$ is
$K$-quasiconformal, where $K$ depends only on $\delta$ in Definition~\ref{quatred}. In addition, $f$ is monotone.
\end{proposition}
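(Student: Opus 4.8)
The plan is to reduce the statement to a pointwise linear-algebra estimate on $A:=Df(x)$ and then feed it into the analytic Definition~\ref{qc}. Write $S:=\im \HH(A)$ and $B:=A-S$, so that $B\in \M_4(\delta)$ a.e. Two structural features of the quaternionic setup drive everything. First, the matrix of a purely imaginary quaternion (set $\alpha=0$ in~\eqref{quat1}, cf.~\eqref{quat1n}) is skew-symmetric, hence $\inn{Sv,v}=0$ and
\[
\inn{Av,v}=\inn{Bv,v}\ge \delta\abs{Bv}\abs{v}\qquad (v\in\R^4).
\]
Second, $S$ is itself a quaternion, so it acts conformally: $\abs{Sv}=\norm{S}\abs{v}$ for every $v$. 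By Proposition~\ref{deltaqc}, whenever $B\neq 0$ it is invertible with $\norm{B}\norm{B^{-1}}\le H(\delta)$; a short argument then shows that $A$ can be singular only if $A=0$ (if $Av=0$ with $v\neq 0$, the displayed inequality forces $Bv=0$, whence $B=0$ by Proposition~\ref{deltaqc}, and then $A=S=0$ since $S$ is conformal). Thus it suffices to bound the linear distortion of $A$ wherever $A\neq 0$.

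For that distortion estimate I would compare the extreme stretchings of $A$ via $Av=Bv+Sv$. The upper bound is immediate, $\norm{A}\le \norm{B}+\norm{S}$. For the lower bound, the displayed inequality gives $\abs{Av}\ge \delta\abs{Bv}\ge \delta\norm{B^{-1}}^{-1}\abs{v}$, while conformality of $S$ gives $\abs{Av}\ge \abs{Sv}-\abs{Bv}\ge (\norm{S}-\norm{B})\abs{v}$. Splitting into the cases $\norm{S}\ge 2\norm{B}$ (where the second bound dominates) and $\norm{S}<2\norm{B}$ (where $\norm{A}<3\norm{B}$ and the first bound together with $\norm{B}\norm{B^{-1}}\le H(\delta)$ applies), both regimes yield
\[
\norm{A}\,\norm{A^{-1}}\le K':=\frac{3\,H(\delta)}{\delta}.
\]

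To pass to $K$-quasiconformality I first note that at points where $A\neq 0$ the inequality $\inn{Av,v}\ge \delta\abs{Bv}\abs{v}>0$ for $v\neq 0$ says the symmetric part of $A$ is positive definite, so every eigenvalue of $A$ has positive real part and $J(x,f)=\det A>0$; in particular $f$ is sense-preserving. Writing the singular values as $s_{\min}=s_1\le\dots\le s_4=s_{\max}$, we have $J(x,f)\ge s_{\max}s_{\min}^3$, and the bound $s_{\max}\le K' s_{\min}$ gives
\[
\norm{Df(x)}^4=s_{\max}^4\le (K')^3\,s_{\max}\,s_{\min}^3\le (K')^3\,J(x,f)
\]
a.e., the inequality being trivial where $Df=0$. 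Since $f$ is by hypothesis a homeomorphism in $W^{1,4}_{\loc}$, Definition~\ref{qc} shows $f$ is $K$-quasiconformal with $K=(K')^3$, depending only on $\delta$.

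Finally, for monotonicity I would integrate the pointwise inequality $\inn{Df(x)v,v}\ge 0$ (valid for a.e.\ $x$ and \emph{all} $v$) along segments. Fixing a direction $e$, Fubini's theorem gives $\inn{Df\,e,e}\ge 0$ a.e.\ on almost every line parallel to $e$; since $f$ is quasiconformal and hence absolutely continuous on such lines,
\[
\inn{f(a+te)-f(a),\,te}=t\int_0^t \inn{Df(a+se)\,e,\,e}\,ds\ge 0,
\]
and continuity of $f$ upgrades this to all segments. Hence $\inn{f(a)-f(b),a-b}\ge 0$ for all $a,b$, i.e.\ $f$ is monotone. The main obstacle throughout is the a priori unbounded term $S=\im\HH(A)$: the decisive point is that $S$ acts as a scaled rotation, so enlarging it inflates every stretching of $A$ at the same rate and leaves the distortion under control — exactly the place where the quaternionic structure is essential.
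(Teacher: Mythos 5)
Your proposal follows the same skeleton as the paper's proof: reduce everything to a pointwise linear-algebra estimate on $A=Df(x)$ via the decomposition $A=B+S$ with $S=\im\HH(A)$, exploit that $S$ is skew-symmetric (so $\inn{Av,v}=\inn{Bv,v}$) and conformal (so $\abs{Sv}=\norm{S}\abs{v}$), and obtain monotonicity by integrating $\inn{Df\,v,v}\ge 0$ along lines --- that last step is essentially verbatim the paper's. Where you genuinely differ is the distortion estimate itself: the paper uses the orthogonality of $v$ and $Sv$ to derive the Pythagorean-type lower bound $\abs{Av}^2\ge\bigl(1-\sqrt{1-\delta^2}\bigr)\bigl(\abs{Bv}^2+\norm{S}^2\bigr)$ for every unit vector $v$, and then compares maximum to minimum keeping the common additive term $\norm{S}^2$; you instead split into the regimes $\norm{S}\ge 2\norm{B}$ and $\norm{S}<2\norm{B}$, using the triangle inequality in the first and $\abs{Av}\ge\delta\abs{Bv}$ together with Proposition~\ref{deltaqc} in the second. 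Both arguments are correct; yours is more elementary (no expansion of squares) at the price of a different constant, which is irrelevant. You are also more explicit than the paper about the passage from the linear distortion bound to the analytic Definition~\ref{qc}: the inequality $\norm{A}^4\le (K')^3 s_{\max}s_{\min}^3$ yields $\norm{A}^4\le (K')^3 J$ only when $\det A>0$, a point the paper passes over in silence.

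However, exactly in that sense-preservation step your argument has a genuine, though easily patched, flaw. You claim that $A\ne 0$ forces $\inn{Av,v}\ge\delta\abs{Bv}\abs{v}>0$ for $v\ne 0$, i.e.\ that the symmetric part of $A$ is positive definite. This fails when $B=0$ but $S\ne 0$: the zero matrix belongs to $\M_4(\delta)$, so Definition~\ref{quatred} permits $Df(x)=S$ to be a nonzero purely imaginary quaternion. Concretely, $f(x)=\mathbf{i}x$ is reduced quasiconformal in this sense (the four-dimensional analogue of the paper's example $f(z)=iz$), and its derivative is everywhere skew-symmetric, with vanishing symmetric part and purely imaginary eigenvalues, so your eigenvalue argument gives nothing for it. The conclusion nevertheless survives, because in this case $A=S$ is a conformal matrix with $\det A=\norm{S}^4>0$; adding that one line (treating $B=0$, $S\neq0$ separately, just as your distortion estimate already implicitly does via its case split) completes the proof.
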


\begin{proof} The essence of this proposition is the algebraic implication
\begin{equation}\label{quatp1}
A-\im \HH(A)\in \mathbb M_4(\delta) \implies  \norm{A} \norm{A^{-1}} \le \widetilde{H}(\delta).
\end{equation}
 We may assume that $A$ is a nonzero matrix.
Let $Q=\im \HH(A)$ and $B=A-Q$.   If $Q=0$, then Proposition~\ref{deltaqc} gives~\eqref{quatp1} with $ \widetilde{H}(\delta)= H(\delta)$. Assume $Q\ne 0$. Fix a unit vector $v\in\R^4$.
Since $Qv/\norm{Q}$ is a unit vector orthogonal to $v$, it follows that
\[\inn{Bv,v}^2+\norm{Q}^{-2}\inn{Bv,Qv}^2\le \abs{Bv}^2\]
Using the inequality $\inn{Bv,v}\ge \delta \abs{Bv}$, we obtain
\begin{equation}\label{quatprop3}
\abs{\inn{Bv,Qv}}\le \sqrt{1-\delta^2}\norm{Q}\abs{Bv},
\end{equation}
which in turn yields
\begin{equation}\label{quatprop2}
\begin{split}
\abs{Av}^2&=\abs{Bv+Qv}^2 = \abs{Bv}^2+\norm{Q}^2+2\inn{Bv,Qv}\\
&\ge (1-\sqrt{1-\delta^2})(\abs{Bv}^2+\norm{Q}^2)+\sqrt{1-\delta^2}(\abs{Bv}-\norm{Q})^2\\
&\ge (1-\sqrt{1-\delta^2})(\abs{Bv}^2+\norm{Q}^2).
\end{split}
\end{equation}
In particular, $A$ is invertible. We also have the trivial estimate
\begin{equation}\label{quatprop1}
\abs{Av}^2\le 2(\norm{Bv}^2+\norm{Q}^2).
\end{equation}
Combining~\eqref{quatprop2} and ~\eqref{quatprop1}, we conclude that
\[\begin{split}
\norm{A}^2\norm{A^{-1}}^2&=
\frac{\max\{\abs{Av}^2\colon \abs{v}=1\}}{\min\{\abs{Av}^2\colon \abs{v}=1\}}\\
&\le \frac{2}{1-\sqrt{1-\delta^2}}
\frac{\max\{\abs{Bv}^2\colon \abs{v}=1\}+\norm{Q}^2}{\min\{\abs{Bv}^2\colon \abs{v}=1\}+\norm{Q}^2} \\
&\le \frac{2}{1-\sqrt{1-\delta^2}}
\frac{\max\{\abs{Bv}^2\colon \abs{v}=1\}}{\min\{\abs{Bv}^2\colon \abs{v}=1\}}\le \frac{2(1+ \sqrt{1-\delta^2})^2}{(1-\sqrt{1-\delta^2})^3}
\end{split}\]
where the last step uses Proposition~\ref{deltaqc}.  This proves~\eqref{quatp1}.
Applying~\eqref{quatp1} to the derivative matrix $A=Df(x)$, we find that $f$ is $K$-quasiconformal with $K= \widetilde{H}(\delta)^3$.

With $A=Df(x)$ and $B=A-\im \HH(A)$ as above, we have
\[\inn{Av,v} = \inn{Bv,v} \ge 0,\quad v\in\R^4.\]
Integrating this inequality along the segments $[a,b]$ on which $f$ is absolutely continuous, we obtain $\Delta_f(a,b)\ge 0$.
The continuity of $f$ then implies $\Delta_f(a,b)\ge 0$ for all $a,b\in\R^4$.
\end{proof}

\begin{remark}\label{remarkk}
If in Definition~\ref{quatred} we do not require $f$ to be homeomorphic, then the proof of Proposition~\ref{quatprop} shows that $f$ is $K$-quasiregular
(see Definition~\ref{qc}).
\end{remark}

It follows from Definition~\ref{quatred} that the set of reduced quasiconformal mappings is a convex cone in four dimensions
as well as in two dimensions. Another similarity with the planar case is provided by the following result.

\begin{proposition}\label{delquat} Any nonconstant $\delta$-monotone mapping $f\colon\R^4\to\R^4$ is
reduced quasiconformal in the sense of Definition~\ref{quatred}.
\end{proposition}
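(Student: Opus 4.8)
The plan is to reduce the statement to a pointwise algebraic inequality for the derivative matrix $A=Df(x)$ at almost every $x$. Since $f$ is nonconstant and $\delta$-monotone, it is quasiconformal by \cite[Cor.~7]{Ko}, and in particular a homeomorphism lying in $W^{1,4}_{\loc}(\R^4;\R^4)$ that is differentiable almost everywhere. At a point $x$ of differentiability I would apply the $\delta$-monotonicity condition to the pair of points $x+tv$ and $x$: the inequality reads $\inn{f(x+tv)-f(x),tv}\ge\delta\abs{f(x+tv)-f(x)}\abs{tv}$, and substituting $f(x+tv)-f(x)=tAv+o(t)$, dividing by $t^2$, and letting $t\to 0^+$ yields $\inn{Av,v}\ge\delta\abs{Av}\abs{v}$ for all $v\in\R^4$; that is, $A\in\M_4(\delta)$. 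It therefore remains to prove the purely algebraic implication that if $A\in\M_4(\delta)$ then $A-\im\HH(A)\in\M_4(\delta')$ for some $\delta'=\delta'(\delta)>0$, which then verifies Definition~\ref{quatred}.

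The key observation is that $\im\HH(A)$ is a purely imaginary quaternion and hence a skew-symmetric matrix, because under the identification \eqref{quat1} quaternion conjugation corresponds to matrix transposition, so a purely imaginary quaternion satisfies $P^T=-P$. Consequently $\inn{(\im\HH(A))v,v}=0$ for every $v$, and writing $B=A-\im\HH(A)$ we obtain $\inn{Bv,v}=\inn{Av,v}\ge\delta\abs{Av}\abs{v}$. Thus the quadratic form is left unchanged by the subtraction, and the entire problem is reduced to comparing $\abs{Bv}$ with $\abs{Av}$.

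For this comparison I would invoke Proposition~\ref{deltaqc}: since $A\in\M_4(\delta)$ it satisfies $\norm{A}\norm{A^{-1}}\le H(\delta)$, which gives the lower bound $\abs{Av}\ge\norm{A}\abs{v}/H(\delta)$. On the other hand $\im\HH(A)$ arises from $A$ through an orthogonal projection in the Frobenius inner product, so its norm is controlled by that of $A$; for a purely imaginary quaternion the operator and Frobenius norms differ only by a fixed factor, and projection does not increase the Frobenius norm, giving $\norm{\im\HH(A)}\le\norm{A}$. Since $\im\HH(A)$ is a quaternion we have $\abs{(\im\HH(A))v}=\norm{\im\HH(A)}\abs{v}$, whence $\abs{Bv}\le\abs{Av}+\norm{A}\abs{v}\le(1+H(\delta))\abs{Av}$. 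Combining the two estimates gives $\inn{Bv,v}\ge\delta\abs{Av}\abs{v}\ge\delta'\abs{Bv}\abs{v}$ with $\delta'=\delta/(1+H(\delta))$, which is precisely $B\in\M_4(\delta')$.

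The passage from $f$ to its derivative and the norm bookkeeping are routine; the real content is the dual role of $\im\HH(A)$, which contributes nothing to the quadratic form (being skew-symmetric) while enlarging the image vector $Av$ by only a bounded factor. The main obstacle to watch is the uniformity of the resulting constant, namely that $\delta'$ must depend only on $\delta$. This is exactly where the distortion bound of Proposition~\ref{deltaqc} is essential: without controlling $\abs{Av}$ from below by $\norm{A}\abs{v}$, the ratio $\abs{Bv}/\abs{Av}$ could fail to be bounded and no uniform $\delta'$ would be available.
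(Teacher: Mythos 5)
Your proof is correct and takes essentially the same route as the paper's: both reduce to the pointwise claim $A=Df(x)\in\M_4(\delta)$, use the skew-symmetry of $Q=\im\HH(A)$ so that $\inn{Bv,v}=\inn{Av,v}$, and combine Proposition~\ref{deltaqc} with the Frobenius-norm projection bound to compare $\abs{Bv}$ with $\abs{Av}$. The only cosmetic difference is that you bound $\norm{Q}\le\norm{A}$ and apply the triangle inequality to $Bv=Av-Qv$, whereas the paper bounds $\norm{B}\le\norm{B}_F\le\norm{A}_F\le 2\norm{A}$ directly, so you obtain the constant $\delta/(1+H(\delta))$ in place of the paper's $\delta/(2H(\delta))$ --- both equally valid.
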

\begin{proof} Since $f$ is quasiconformal by~\cite[Cor. 7]{Ko}, we have $f\in W^{1,4}_{\loc}(\Omega ; \R^4)$.
Fix a point $x\in \R^4$ where $f$ is differentiable and let $A=Df(x)$, $Q=\im \HH(A)$, $B=A-Q$.
The definition of $\delta$-monotonicity implies   $A\in \M_4(\delta)$.
For all $v\in\R^n$ we have $\inn{Qv,v}=0$ since $Q$ is an antisymmetric matrix. Thus,
\begin{equation}\label{shouldname}
\inn{Bv,v}=\inn{Av,v}\ge \delta\abs{Av}\abs{v},\quad v\in\R^n.
\end{equation}
 It remains to prove that $\abs{Av}\ge c\abs{Bv}$ for a constant $c>0$ that depends only on $\delta$.
Note that $\im \HH(A)$ is the orthogonal projection of $A$ onto the space of purely imaginary quaternions, considered as a linear subspace of $\M_4$. Therefore, $B=A-\im\HH(A)$ is the projection of $A$ onto the orthogonal
complement of purely imaginary quaternions. Since orthogonal projections in $\M_n$ do not
increase the Frobenius norm $\norm{\cdot}_F$, it follows that
\begin{equation}\label{aname}
\norm{B}\le \norm{B}_F\le \norm{A}_F\le 2\norm{A}
\end{equation}
where we have used the relation between operator norm and Frobenius norm~\cite[p.313]{HJbook}. Combining
Proposition~\ref{deltaqc} with~\eqref{aname} we obtain
\[\abs{Av} \ge \frac{\norm{A} \abs{v} }{H(\delta)} \ge \frac{\norm{B} \abs{v}}{2 H(\delta)} \ge \frac{\abs{Bv}}{2H(\delta)}. \]
This estimate together with~\eqref{shouldname} imply $B \in \M_4(\delta/2H(\delta))$.
\end{proof}

Our last result is an extension of Theorem~\ref{fvar} to four dimensions.

\begin{theorem}\label{quatvar} Let $f\colon \R^4 \to \R^4$ be a reduced quasiconformal mapping in the sense of Definition~\ref{quatred}.
Then for any $q>1$ $f$ has finite $\phi$-variation on lines with $\phi$ as in~\eqref{fvar1}.
\end{theorem}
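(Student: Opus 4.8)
The plan is to deduce Theorem~\ref{quatvar} from Theorem~\ref{fvgen} (which is stated for $\R^n$ and so applies verbatim in $\R^4$) by the same route used to obtain Theorem~\ref{fvar} in the plane: the only thing to verify is that a reduced quasiconformal $f\colon\R^4\to\R^4$ satisfies the hypothesis~\eqref{fvgen1} on $\Omega=\R^4$ with a modulus $\eta$ independent of the points involved. The planar proof replaced $f$ by $f^\lambda(z)=f(z)+i\lambda z$; the quaternionic substitute is to perturb $f$ by an arbitrary purely imaginary quaternion $P$, that is, to set $f^P(x)=f(x)+Px$. The first step is the algebraic observation that this perturbation leaves the reduced part of the derivative unchanged: since $A\mapsto\im\HH(A)$ is the orthogonal projection onto the space of purely imaginary quaternions and $P$ lies in that space, we have $\im\HH(Df^P)=\im\HH(Df)+P$, whence $Df^P-\im\HH(Df^P)=Df-\im\HH(Df)\in\M_4(\delta)$ for a.e. $x$. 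Thus every $f^P$ satisfies the matrix condition of Definition~\ref{quatred} with the same $\delta$.

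Next I would show that the whole family $\{f^P\}$ is uniformly quasiconformal, hence uniformly quasisymmetric. By Remark~\ref{remarkk} each $f^P$ is $K$-quasiregular, and it is monotone because adding the antisymmetric matrix $P$ does not alter the modulus of monotonicity: $\Delta_{f^P}(a,b)=\Delta_f(a,b)\ge 0$ for all $a,b$, since $\inn{P(a-b),a-b}=0$. By~\cite[Cor. 1.5]{IKO} a monotone quasiregular mapping is a homeomorphism, so $f^P$ is $K$-quasiconformal with $K=\widetilde H(\delta)^3$ independent of $P$ by Proposition~\ref{quatprop}. Since quasiconformal homeomorphisms of $\R^4$ are quasisymmetric~\cite{Hebook}, a single homeomorphism $\eta\colon[0,\infty)\to[0,\infty)$ serves as a modulus of quasisymmetry for every $f^P$. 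Given distinct $a,b,c$, I then choose $P$ so as to reduce $\abs{f^P(b)-f^P(a)}$ to $\Delta_f(a,b)$. Writing $u=b-a$ and $w=f(b)-f(a)$, this rests on two facts about quaternions: first, the map $q\mapsto qu$ carries the three-dimensional space of purely imaginary quaternions bijectively onto $u^\perp$ (it is injective as right multiplication by $u\ne 0$, and its image lies in $u^\perp$ because a purely imaginary quaternion is an antisymmetric matrix); second, quaternions act as similarities, $\abs{Pv}=\norm{P}\abs{v}$. The first fact lets me pick purely imaginary $P$ with $Pu=-(w-\inn{w,u}\abs{u}^{-2}u)$, so that $f^P(b)-f^P(a)=w+Pu=\inn{w,u}\abs{u}^{-2}u$ and hence $\abs{f^P(b)-f^P(a)}=\inn{w,u}/\abs{u}=\Delta_f(a,b)$; the second gives $\norm{P}=\abs{Pu}/\abs{u}\le \abs{w}/\abs{u}=\abs{f(b)-f(a)}/\abs{b-a}$.

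With $P$ so chosen, the argument concludes as in the plane. Quasisymmetry of $f^P$ yields $\abs{f^P(c)-f^P(a)}\le\eta(\abs{c-a}/\abs{b-a})\Delta_f(a,b)$, and the triangle inequality together with the bound on $\norm{P}$ gives
\[
\abs{f(c)-f(a)}\le\abs{f^P(c)-f^P(a)}+\norm{P}\abs{c-a}\le \frac{\abs{c-a}}{\abs{b-a}}\abs{f(b)-f(a)}+\eta\Bigl(\frac{\abs{c-a}}{\abs{b-a}}\Bigr)\Delta_f(a,b),
\]
which is exactly~\eqref{fvgen1}; Theorem~\ref{fvgen} then finishes the proof. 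I expect the main obstacle to be the uniform quasisymmetry step: one must know not merely that each $f^P$ is quasiregular but that it is a genuine homeomorphism of $\R^4$ carrying a distortion bound independent of $P$, so that one modulus $\eta$ works for the entire family. This is precisely the role played by~\cite[Cor. 1.5]{IKO} in the planar proof, and securing its quaternionic counterpart — that monotonicity upgrades the quasiregular maps $f^P$ to homeomorphisms — is the crux. The remaining quaternion linear algebra, though peculiar to dimension four, is routine once the similarity identity $\abs{Pv}=\norm{P}\abs{v}$ and the surjectivity of $q\mapsto qu$ onto $u^\perp$ are established.
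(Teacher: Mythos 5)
Your proposal follows essentially the same route as the paper's proof: perturbation by purely imaginary quaternions $f^Q(x)=f(x)+Qx$ (which leaves the reduced part of the derivative, and hence $\delta$, unchanged), realization of $\Delta_f(a,b)$ as $\abs{f^Q(b)-f^Q(a)}$ for a suitable $Q$ with $\norm{Q}\le \abs{f(b)-f(a)}/\abs{b-a}$, a uniform modulus of quasisymmetry for the family $\{f^Q\}$ obtained from monotonicity plus quasiregularity, and an application of Theorem~\ref{fvgen}. Two small corrections: the homeomorphism step in $\R^4$ rests on Theorem~1.2 of \cite{KO} (a monotone quasiregular mapping on $\R^n$ is either constant or a homeomorphism), not on the planar result \cite[Cor.~1.5]{IKO} that you cite, and because of the ``constant'' alternative one must first dispose of the trivial case of affine $f$ so that no $f^Q$ can be constant, exactly as the paper does by assuming $f$ nonlinear.
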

\begin{proof}  For a purely imaginary quaternion $Q\in\M_4$ we define $f^Q(x)=f(x)+Qx$.
Recall the definition of the modulus of monotonicity $\Delta_f$ in~\eqref{Deltadef}.
We claim that
\begin{equation}\label{quatvar1}
\Delta_f(a,b)=\min_{Q}\abs{f^Q(a)-f^Q(b)}
\end{equation}
where the minimum is taken over all purely imaginary quaternions (and is attained). Indeed,
\[\Delta_f(a,b)=\Delta_{f^Q}(a,b)\le \abs{f^Q(a)-f^Q(b)},\quad \text{for any $Q$ with }\re Q=0.\]
In proving the converse inequality we may assume that $v:=a-b$ is a nonzero vector.
Applying the unit quaternions $\mathbf{i}$, $\mathbf{j}$, and $\mathbf{k}$ to $v$,
we obtain an orthogonal basis of $\R^4$, namely $\{v,\mathbf{i}v,\mathbf{j}v,\mathbf{k}v\}$.
Expand the vector $f(a)-f(b)$ in this basis:
\begin{equation}\label{quatexp}
f(a)-f(b)=\alpha v+\beta\mathbf{i}v+\gamma\mathbf{j}v+\zeta\mathbf{k}v.
\end{equation}
In these terms, $\Delta_f(a,b)=\inn{\alpha v,v/\abs{v}}=\alpha\abs{v}$. Since $f$ is monotone by Proposition~\ref{quatprop}, we have $\alpha\ge 0$.
The quaternion $Q=-\beta\mathbf{i}-\gamma\mathbf{j}-\zeta\mathbf{k}$ satisfies
\[\abs{f^Q(a)-f^Q(b)}=\alpha\abs{v}=\Delta_f(a,b)\]
which proves~\eqref{quatvar1}.
For future reference, observe that~\eqref{quatexp} implies $\abs{f(a)-f(b)}\ge \abs{Qv}=\norm{Q}\abs{v}$, hence
\begin{equation}\label{quatest}
\norm{Q}\le \frac{\abs{f(a)-f(b)}}{\abs{a-b}}.
\end{equation}

Since linear mappings trivially satisfy the conclusion of the theorem, we may assume that $f$
is nonlinear.   By Remark~\ref{remarkk} there exist $K<\infty$ such that   $f^Q$ is $K$-quasiregular for all purely imaginary quaternions. Also, $f^Q$ is monotone by Proposition~\ref{quatprop}. By Theorem 1.2~\cite{KO} any monotone quasiregular mapping defined on $\R^n$ is either constant or a homeomorphism.
Since $f$ is not linear, $f^Q$ cannot be constant. Thus $f^Q$ is $K$-quasiconformal.
By~\cite[Thm 11.14]{Hebook} the family $f^Q$ has  a common modulus of quasisymmetry $\eta$. Given distinct points $a,b,c\in \R^4$, let $Q$ be a minimizing quaternion in~\eqref{quatvar1}.
The quasisymmetry of $f^{Q}$ implies
\[
\abs{f^{Q}(c)-f^{Q}(a)}\le \eta\left(\frac{\abs{c-a}}{\abs{b-a}}\right)\abs{f^{Q}(b)-f^{Q}(a)}.
\]
Here $\abs{f^{Q}(b)-f^{Q}(a)}=\Delta_f(a,b)$. Using~\eqref{quatest} we obtain
\[\begin{split}
\abs{f(c)-f(a)}&\le \abs{Q(c-a)}+\abs{f^{Q}(c)-f^{Q}(a)} \\
&\le \frac{\abs{c-a}}{\abs{a-b}}\abs{f(a)-f(b)}+\eta\left(\frac{\abs{c-a}}{\abs{b-a}}\right)\Delta_f(a,b),
\end{split}\]
which is~\eqref{fvgen1}. It remains to apply Theorem~\ref{fvgen} to $f$.
\end{proof}

\section*{Acknowledgement}
We thank Tadeusz Iwaniec   for many stimulating discussions on the subject of this paper.

\bibliographystyle{amsplain}

\begin{thebibliography}{14}

\bibitem{AN}
G. Alessandrini \and V. Nesi, \textit{Beltrami operators, non-symmetric elliptic equations and quantitative Jacobian bounds}, Ann. Acad. Sci. Fenn. Math.,
to appear.

\bibitem{AIMbook}
K. Astala, T. Iwaniec, and G. J. Martin, \textit{Elliptic partial differential equations and quasiconformal mappings in the plane},
Princeton Mathematical Series, vol.~48, Princeton University Press, 2009.

\bibitem{AJ}
K. Astala \and J. J\"a\"askel\"ainen, \textit{Homeomorphic solutions to reduced Beltrami equations}, preprint, arXiv:0812.2322.

\bibitem{BKR}
Z. Balogh,  P. Koskela, \and S.  Rogovin, \textit{Absolute continuity of quasiconformal mappings on curves}, Geom. Funct. Anal. {\bf 17} (2007), no.~3, 645--664.

\bibitem{Bi} C. J. Bishop,
\textit{Quasiconformal mappings which increase dimension},
Ann. Acad. Sci. Fenn. Math. \textbf{24} (1999), no.~2, 397--407.

\bibitem{BHS}
M. Bonk, J. Heinonen, \and E. Saksman,
\textit{The quasiconformal Jacobian problem}, in ``In the tradition of Ahlfors and Bers. III'',
77--96, Contemp. Math., 355, Amer. Math. Soc., Providence, RI, 2004.

\bibitem{EG}
L. C. Evans \and R. F. Gariepy,
\textit{Measure theory and fine properties of functions}, CRC Press, Boca Raton, FL, 1992.

\bibitem{FS}
D. Faraco \and L. Sz\'ekelyhidi,
\textit{Tartar's conjecture and localization of the quasiconvex hull in $\mathbb R^{2\times 2}$},
Acta Math. {\bf 200} (2008), no. 2, 279--305.

\bibitem{Ge60}
F. W. Gehring, \textit{The definitions and exceptional sets for quasiconformal mappings}, Ann. Acad. Sci. Fenn. Ser. AI Math. {\bf 281} (1960), 1--28.

\bibitem{Ge73}
F. W. Gehring,
\textit{The $L^p$-integrability of the partial derivatives of a quasiconformal mapping},
Acta Math. \textbf{130} (1973), 265--277.

\bibitem{Hebook}
J. Heinonen,  \textit{Lectures on analysis on metric spaces}, Universitext. Springer-Verlag, New York, 2001.

\bibitem{HJbook}
R. A. Horn \and C. R. Johnson,  \textit{Matrix analysis},  Cambridge University Press, Cambridge, 1985.

\bibitem{IKO}
T. Iwaniec, L. V.  Kovalev,  \and J. Onninen, \textit{On injectivity of quasiregular mappings},  Proc. Amer. Math. Soc., to appear.

\bibitem{IKO3}
T. Iwaniec, L. V.  Kovalev,  and J. Onninen, \textit{Dynamics of quasiconformal fields}, preprint, arXiv:0811.4217.

\bibitem{LV}
O. Lehto and K. I. Virtanen, \textit{Quasiconformal mappings in the plane}, 2nd edition. Springer-Verlag, New York--Heidelberg, 1973.

\bibitem{Ko}
L. V.  Kovalev, \textit{Quasiconformal geometry of monotone mappings},
J. Lond. Math. Soc. (2) \textbf{75} (2007), no.~2, 391--408.

\bibitem{KO}
L. V.  Kovalev \and J. Onninen,  \textit{On invertibility of Sobolev mappings}, preprint,  arXiv:0812.2350.

\bibitem{Ma99}
J. Mal\'y,
\textit{Absolutely continuous functions of several variables},
J. Math. Anal. Appl. \textbf{231} (1999), no.~2, 492--508.

\bibitem{MO}
J. Musielak \and W. Orlicz, \textit{On generalized variations. I}, Studia Math. \textbf{18} (1959) 11--41.

\bibitem{Rebook}
 Yu. G.  Reshetnyak, \textit{Space mappings with bounded distortion},   American Mathematical Society, Providence, RI, 1989.

\bibitem{Ribook}
S. Rickman, \textit{Quasiregular mappings}, Springer-Verlag, Berlin, 1993.


\bibitem{Tu81}
P. Tukia,
\textit{A quasiconformal group not isomorphic to a M\"obius group},
Ann. Acad. Sci. Fenn. Ser. A I Math. \textbf{6} (1981), no.~1, 149--160.

\bibitem{Wa}
D. Waterman, \textit{On convergence of Fourier series of functions of generalized bounded variation},
Studia Math. \textbf{44} (1972), 107--117.

\bibitem{Yo}
L. C. Young, \textit{General inequalities for Stieltjes integrals and the convergence of Fourier series},
Math. Ann. \textbf{115} (1938), no.~1, 581--612.

\bibitem{Zy}
A. Zygmund, \textit{Trigonometric series}, 3rd edition. Cambridge University Press, Cambridge, 2002.

\end{thebibliography}

\end{document}